\newcommand{\ignore}[1]{}
\newtheorem{theorem}{Theorem}[section]
\newtheorem{lemma}[theorem]{Lemma}
\newtheorem{corollary}[theorem]{Corollary}
\newcommand{\Proof}[1]
        {
        \noindent
        \emph{Proof #1.}~
        }
\newsavebox{\smallProofsym}                     
\newcommand{\smalleop}[1]
        {
        \mbox{} \hfill #1~~\usebox{\smallProofsym}\!\!\!\!\!\!\
        }
\newcommand{\parag}[1]{\vspace{2mm}

\noindent{\bf #1} }
\newcommand{\NN}{\ensuremath{\mathbb N}}
\newcommand{\ZZ}{\ensuremath{\mathbb Z}}
\newcommand{\RR}{\ensuremath{\mathbb R}}
\newcommand{\CC}{\ensuremath{\mathbb C}}
\newcommand{\FF}{\ensuremath{\mathbb F}}
\newcommand{\DD}{\ensuremath{\mathbb D}}
\newcommand{\SSS}{\ensuremath{\mathbb S}}
\newcommand{\pts}{\mathcal P}
\newcommand{\planes}{\Pi}
\newcommand{\lines}{\mathcal L}
\newcommand{\SL}{{\mathrm{SL}}}
\def\re{\text{Re}}
\global\long\def\r#1{r_{A}^{\div}\left(#1\right)}
\newcommand{\bi}{{\bf i}}
\def\eps{{\varepsilon}}
\begin{document}
\pagenumbering{arabic}

\title{Sum-Product Phenomena for Planar Hypercomplex Numbers\footnote{This research project was done as part of the 2018 CUNY Combinatorics REU, supported by NSF grant DMS-1710305.}}

\author{
Matthew Hase-Liu\thanks{Harvard University, Cambridge, MA, USA. {\sl matthewhaseliu@college.harvard.edu}}
\and
Adam Sheffer\thanks{Department of Mathematics, Baruch College, City University of New York, NY, USA.
{\sl adamsh@gmail.com}. Supported by NSF award DMS-1710305 and PSC-CUNY award 61666-00-49.}}

\date{}
\maketitle
\begin{abstract}
We study the sum-product problem for the planar hypercomplex numbers: the dual numbers and double numbers.
These number systems are similar to the complex numbers, but it turns out that they have a very different combinatorial behavior.
We identify parameters that control the behavior of these problems, and derive sum-product bounds that depend on these parameters.
For the dual numbers we expose a range where the minimum value of $\max\{|A+A|,|AA|\}$ is neither close to $|A|$ nor to $|A|^2$.

To obtain our main sum-product bound, we extend Elekes' sum-product technique that relies on point-line incidences.
Our extension is significantly more involved than the original proof, and in some sense runs the original technique a few times in a bootstrapping manner.
We also study point-line incidences in the dual plane and in the double plane, developing analogs of the Szemer\'edi--Trotter theorem.
As in the case of the sum-product problem, it turns out that the dual and double variants behave differently than the complex and real ones.

\end{abstract}

\section{Introduction}

It is not uncommon for a combinatorial problem to be defined over $\RR$, to then be generalized to $\CC$, and then further generalized to the quaternions.
For example, this is the case for the sum-product problem \cite{BL18,KR13,SW17}, for geometric incidence problems  \cite{SSZ18,SS08,ST12}, and for the Sylvester--Gallai problem \cite{EPS06}.

Both the complex numbers and the quaternions are types of \emph{hypercomplex numbers}.
A system of hypercomplex numbers is a unital algebra with every element having the form
\[ a_0 + a_1 \bi_1 + a_2 \bi_2 + \cdots + a_n \bi_n. \]
Here $n\in \NN$, $a_0,a_1,\ldots,a_n\in \RR$, and $\bi_1,\ldots,\bi_n$ are called \emph{imaginary units}.
To be an algebra over the reals, the system also needs to include a multiplication table for the imaginary units.
The \emph{dimension} of the system is $n+1$, agreeing with the standard definition of the dimension of a vector space.
For example, the complex numbers are a two-dimensional system with the multiplication rule $\bi^2=-1$.
The quaternions form a system of dimension four and involves a $3\times3$ multiplication table for the three imaginary units.
For a nice basic introduction to hypercomplex numbers, see for example \cite{KS89}.

We refer to two-dimensional systems of hypercomplex numbers as \emph{planar}.
Up to isomorphisms, there are exactly three such planar systems: The complex numbers, the \emph{dual numbers}, and the \emph{double numbers} (for a proof of this claim, see for example \cite[Section 2]{KS89}).
The dual numbers are of the form $a+b\eps$, where $a,b\in \RR$, and with the multiplication rule $\eps^2=0$.
The double numbers are of the form $a+bj$ with the multiplication rule $j^2=1$.
Double number are often also called split-complex numbers, and more generally have at least 18 different names in the literature (Clifford referred to them as \emph{algebraic motors}, and some other names are \emph{spacetime numbers} and \emph{anormal-complex numbers}).

The dual and double numbers seem to appear in many different fields.
For example, they are used in String Theory \cite{GGP96}, Kinematics \cite{Fischer00}, and Signal Processing \cite{MG09}.
Dual numbers play a role in the theory of schemes (for example, see \cite{Hars83}). 
They are used in geometry, and we were originally introduced to them through Kisil's lecture notes on the Erlangen program \cite{Kisil12}.
Double numbers were even used to design algorithms for dating sites \cite{KGG12}.
However, to the best of our knowledge, dual and double numbers were not seriously studied from a combinatorial perspective.
The goal of the current work is initiate such a combinatorial study.

We study a variant of the sum-product problem for dual and double numbers.
To obtain sum-product results, we also study other combinatorial properties of these number systems.
In particular, we derive variants of the Szemer\'edi--Trotter theorem for dual and double numbers.

Beyond initiating a combinatorial study of dual and double numbers, we believe that our results are also of intrinsic interest to the study of sum-product phenomena.
The sum-product problem seems to have a similar behavior over the reals, over the complex, and over the quaternions.
Results over the reals are usually extended to the complex and to the quaternions.
Surprisingly, the sum-product problem has a significantly different behavior over the dual numbers.
In addition, our main technique is based on a new idea: Using Elekes' sum-product technique several times, each time relying on the previous result in a bootstrapping manner.

\parag{The sum product problem.}
Given a finite set $A\subset \RR$, the \emph{sum set} and \emph{product set} of $A$ are respectively defined as
\[ A+A = \{a+a' : a,a'\in A\} \quad \text{ and } \quad AA = \{a\cdot a' : a,a'\in A\}. \]

Erd\H os and Szemer\'edi \cite{ES83} conjectured that for every $\rho>0$, any sufficiently large $A\subset \NN$ satisfies
$\max⁡\{|A+A|,|AA|\} =\Omega\left(|A|^{2-\rho}\right)$. (Since $\eps$ is already taken and $\delta$ is also used in our analysis, throughout the paper we will use $\rho$ as a small positive real number.)
The problem was later generalized to sets of real numbers, sets of complex numbers, quaternions, finite fields, and more.
The problem remains wide-open for all of these variants.
For the case of $A\subset \RR$, in 2009 Solymosi \cite{Soly09} proved the bound $\max⁡\{|A+A|,|AA|\} =\Omega^*\left(|A|^{4/3}\right)$. In the $\Omega^*(\cdot)$-notation, we neglect subpolynomial factors such as $\log |A|$ and $2^{\sqrt{\log|A|}}$. The same holds for the $O^*(\cdot)$-notation and for the $\Theta^*(\cdot)$-notation.
After a series of improvements, the current best bound over $\RR$, derived in \cite{Shakan18}, is $\Omega^*\left(|A|^{4/3+5/5277}\right)$.
Similar bounds exist for the complex numbers and for the quaternions (for example, see \cite{BL18,SW17}).
The best know upper bound for all of these variants is $O^*\left(|A|^2\right)$.

\parag{Dual numbers.}
Let $\DD$ be the set of dual numbers: The extension of $\RR$ with the extra element $\eps$ and the rule $\eps^2=0$.
We write a number $a\in \DD$ as $a_1+\eps a_2$.
Imitating the complex numbers, we refer to $a_1$ as the \emph{real} part of $a$, and to $a_2$ as the \emph{imaginary} part of $a$.
Unlike $\RR$ and $\CC$, the dual numbers do not form a field, since some dual numbers are not invertible.
In particular, a dual number has an inverse if and only if it has a non-zero real part.

Unlike the cases of $\RR$, $\CC$, and the quaternions, the sum--product conjecture is false over the dual numbers.
For example, consider the set
\[ A = \{ 1 + \eps m \in \DD \ :\ m\in \ZZ,\ 1\le m \le n\},\]
and note that
\[ A+A = \{ 2+\eps m\ :\ m\in \ZZ,\ 2\le m \le 2n\} \quad \text{ and } \quad AA = \{ 1+\eps m\ :\ m\in \ZZ,\ 2\le m \le 2n\}. \]
That is, the sizes of both the sum set and product set are linear in $|A|$.
Note that all of the elements of $A$ are invertible, and so are the elements of $A+A$ and of $AA$.
When allowing non-invertible elements, one can have a product set of \emph{size one} and a linear-sized sum set.
However, we are not interested in constructions that are based on non-invertible elements.

It turns out that the maximum number of elements of $A$ that have the same real part plays an important role.
We say that a set $A\subset \DD$ has \emph{multiplicity} $k$ if every real number is the real part of at most $k$ elements of $A$.
We usually denote the size of $A$ as $n$ and the multiplicity of $A$ as
$n^{\alpha}$, for some $0\le \alpha \le 1$.

To adapt the above construction to the case of multiplicity $n^\alpha$, we consider the set
\[ A = \{ a_1 + \eps a_2  \ :\ a_1,a_2\in \ZZ,\ 1\le a_1 \le n^{1-\alpha}, \ 1\le a_2 \le n^{\alpha}\},\]
and note that
\begin{align*}
A+A &= \{ m_1+\eps m_2\ :\ m_1,m_2\in \ZZ,\ 2\le m_1 \le 2n^{1-\alpha},\ 2\le m_2 \le 2n^{\alpha} \},\\
AA &\subset \{ m_1+\eps m_2\ :\ m_1,m_2\in \ZZ,\ 1\le m_1 \le n^{2-2\alpha}, \ 2\le m_2 \le 2n \}.
\end{align*}

Note that in this construction $A$ indeed has multiplicity $n^\alpha$.
The size of the sum set is $\Theta(|A|)$ and the size of the product set is $O(|A|^{3-2\alpha})$.
Thus, the sum-product conjecture is false for any $\alpha>1/2$.
On the other hand, we show that $\max \{|A+A|,|AA|\}$ is super-linear in $|A|$ when $\alpha$ is not too large.
Set $\kappa =  (39 - \sqrt{721})/20 \approx 0.607$.

\begin{theorem} \label{th:DualSP}
Let $A$ be a set of $n$ dual numbers with multiplicity $n^\alpha$, for some $0\le \alpha<\kappa$.
Then for every $\rho>0$,
\[ \max \left\{|A+A|,|AA|\right\} = \begin{cases}
\Omega^*\left(n^{(4-2\alpha)/3}\right), & \quad 0\le \alpha < 1/8, \\[2mm]
\Omega\left(n^{5/4-\rho}\right), & \quad 1/8\le \alpha<1/3, \\[2mm]
\Omega^*\left(n^{3/2-5\alpha/8}\right), & \quad 1/3\le\alpha<1/2,\\[2mm]
\Omega\left(n^{9/4-39\alpha/16+5\alpha^2/8-\rho}\right), & \quad 1/2\le\alpha<\kappa.
\end{cases}\]
\end{theorem}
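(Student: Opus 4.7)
The plan is to adapt Elekes' classical sum-product argument to the dual setting, using the Szemer\'edi--Trotter-type theorem for the dual plane developed earlier in the paper. Given $A\subset\DD$ with $|A|=n$ and multiplicity $n^\alpha$, I would consider the point set $P = (A+A)\times(AA) \subset \DD^2$ and the family of dual lines $\ell_{a,b} : y = a(x-b)$ indexed by pairs $(a,b)\in A\times A$. By construction each $\ell_{a,b}$ contains the $n$ points $(b+a',\, a\cdot a')$ with $a'\in A$, so the total number of incidences $I(P,\mathcal L)$ is at least $n^3$. Matching this with the dual incidence bound yields an inequality of the shape $n^3 \le C\,|P|^{2/3}|\mathcal L|^{2/3}\,g(n,\alpha) + (\text{error})$, which rearranges into a lower bound on $\max\{|A+A|,|AA|\}$ in terms of $n$ and $\alpha$.

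To obtain all four regimes, I would run this scheme as a bootstrapping sequence. For small $\alpha$, the direct dual Szemer\'edi--Trotter bound already gives Case~1, $\Omega^*(n^{(4-2\alpha)/3})$. For larger $\alpha$ the direct application is too weak, so I would feed the Case~1 bound back into the incidence estimate: knowing that $\max\{|A+A|,|AA|\}$ is already somewhat large constrains how many dual lines in the Elekes configuration can be concurrent or share a common real slope, and this refined input tightens the incidence inequality to yield the constant exponent $n^{5/4-\rho}$ of Case~2. Iterating, the Case~2 bound feeds a sharper estimate that produces Case~3, $n^{3/2-5\alpha/8}$, and plugging Case~3 into a final round of Elekes yields Case~4. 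The quadratic shape $9/4-39\alpha/16+5\alpha^2/8$ and the threshold $\kappa=(39-\sqrt{721})/20$ should then drop out algebraically as the value of $\alpha$ at which this last bootstrap step stops improving on the exponent inherited from the previous range.

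The main technical obstacle is controlling the degenerate behavior of dual lines. A dual slope $a\in\DD$ whose real part vanishes does not define a genuine line in $\RR^4$; the corresponding $\ell_{a,b}$ collapses into a lower-dimensional object, and the multiplicity parameter $n^\alpha$ is precisely a measure of how many such degenerate slopes can appear when $a$ ranges over $A$. This forces the dual Szemer\'edi--Trotter bound to degrade as a function of $\alpha$, and at each bootstrap step the contribution of degenerate lines has to be separated from the generic part and analyzed on its own. A secondary but unavoidable difficulty is that $A+A$ and $AA$ can themselves have multiplicity larger than $n^\alpha$, so each iteration must track how the multiplicity propagates through the sum and product sets and confirm that the blow-up does not cancel the gain from the bootstrap. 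Balancing these two effects while keeping the exponents tight across all four ranges is what makes the argument substantially more involved than the real case and, I expect, the place where most of the work lies.
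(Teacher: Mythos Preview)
Your proposal misidentifies both the source of each bound and the main technical obstacle.

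The range $0\le\alpha<1/8$ is \emph{not} obtained from the Elekes setup; the paper handles it by adapting Solymosi's multiplicative-energy argument (grouping $A\times A$ by the real part of ratios and using that consecutive wedges in $\RR^2$ are disjoint). The Elekes route you describe cannot give this exponent: with the lines $\ell_{a,b}:y=a(x-b)$, the multiplicity of $\mathcal L$ in the sense of Theorem~\ref{th:DualST} is $n^{1+\alpha}$, not $n^\alpha$, because for each of the $\le n^\alpha$ slopes $a$ with a fixed real part there are $n$ choices of $b$. Plugging this into Theorem~\ref{th:DualST} yields only $|A+A|\cdot|AA|\gtrsim n^{2-\alpha/2}$, hence $\max\gtrsim n^{1-\alpha/4}$, which is trivial.

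The ranges $1/8\le\alpha<1/2$ are \emph{not} obtained by feeding one case into the next. The paper runs a single Elekes argument, but does not use Theorem~\ref{th:DualST} at all; it works directly in $\RR^4$ and splits the incidences into \emph{standard} ones (two incident $2$-flats meet only at the point) and \emph{special} ones (they share a full line). Standard incidences are bounded by Theorem~\ref{th:IncR4} with no $\alpha$-loss, giving $n^{5/4-\rho}$ immediately. Special incidences come from \emph{line families}---sets of distinct dual lines with an infinite common intersection---and their analysis is the heart of the proof: a dyadic decomposition over four parameters $(\alpha',\beta,\gamma,\delta)$ that record the size of a real-projection class, the size of a family, the number of points over a special point, and the number of families through that point; three competing upper bounds on the number of special points are then combined, together with the real Szemer\'edi--Trotter theorem applied inside each imaginary plane. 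It is this internal interplay of inequalities, not a cascade across $\alpha$-ranges, that the introduction calls ``bootstrapping.''

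Finally, the range $1/2\le\alpha<\kappa$ is not a further round of Elekes. One simply prunes $A$ down to multiplicity $n^{1/2-\rho'}$, obtaining $|A'|=\Omega(n^{3/2-\alpha})$, and applies the bound $|A'|^{3/2-5(1/2)/8}$ from the previous range; the quadratic exponent $9/4-39\alpha/16+5\alpha^2/8$ and the threshold $\kappa$ drop out of that substitution.

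The obstacle you name---slopes with vanishing real part---is disposed of in one line by discarding non-invertible elements. The genuine difficulty is that distinct \emph{non-degenerate} dual lines can meet in infinitely many points (Lemma~\ref{le:LineFamilyDual}), which is exactly what forces the special-incidence analysis. Your plan to track how multiplicity propagates through $A+A$ and $AA$ is not a step the paper needs, and the iterative scheme you outline would not recover the stated exponents.
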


Combining Theorem \ref{th:DualSP} with the above construction leads to a surprising observation: When $1/2<\alpha<\kappa$ the bound for the sum-product problem is neither $\Theta^*(n^2)$ nor $\Theta(n)$.
It is hard to guess what the actual value should be.
One possibility is that the sum-product conjecture holds when $\alpha\le 1/2$, and is replaced with $\Theta^*(n^{3-2\alpha})$ when $\alpha>1/2$.

Different bounds in the statement of Theorem \ref{th:DualSP} are obtained using different approaches.
The bound for the range $0\le \alpha < 1/8$ is obtained from a relatively simple adaptation of Solymosi's technique from \cite{Soly09}.
Surprisingly, we were able to obtain a stronger bound when $\alpha\ge 1/8$ by relying on an earlier approach of Elekes \cite{Elekes97}.
While we use Elekes' approach, our technique is significantly more involved, and is the main result of this work.
In addition to having several new steps, we use Elekes' approach several times, each time relying on the result of the previous case.

Our extension of Elekes' technique leads to the bounds of Theorem \ref{th:DualSP} for range $1/8\le \alpha<1/3$ and also for the range $1/3\le\alpha<1/2$.
This technique breaks down when $\alpha \ge 1/2$.
The bound for the range of $1/2\le\alpha<\kappa$ is obtained by a naive approach --- removing elements from $A$ to decrease the multiplicity to $1/2-\rho$, and then applying the bound of Theorem \ref{th:DualSP} for the range $1/3\le\alpha<1/2$.

\parag{Double numbers.}
Let $\SSS$ be the set of double numbers: The extension of $\RR$ with the extra element $j$ and the rule $j^2=1$.
We write a number $a\in \SSS$ as $a_1+j a_2$.
As before, we refer to $a_1$ as the \emph{real} part of $a$, and to $a_2$ as the \emph{imaginary} part of $a$.
The double numbers do not form a field, since some double numbers are not invertible.

Unlike the case of a dual numbers, we could not find a counterexample to the sum-product conjecture in the case of double numbers.
To see some other surprising behavior of the double numbers, consider the sets
\[ A = \{ m+j m :\ 1\le m \le n \} \quad \text{ and } \quad B = \{ m'-j m' :\ 1\le m' \le n \}.\]
Note that $AB = \{0\}$.
That is, the product set of two large sets could be of size one.
This example will not be very relevant for us, since it heavily relies on non-invertible elements.

We say that a set $A\subset \SSS$ has multiplicity $k$ if for every $r\in \RR$ at most $k$ elements $a_1 + ja_2\in A$ satisfy $a_1+a_2=r$ and at most $k$ satisfy $a_1-a_2=r$.
Recall that $\kappa =  (39 - \sqrt{721})/20 \approx 0.607$.
We derive the following sum-product bound for double numbers.

\begin{theorem} \label{th:DoubleSP}
Let $A$ be a set of $n$ double numbers with multiplicity $n^\alpha$, for some $0\le \alpha<\kappa$.
Then for every $\rho>0$,
\[ \max \left\{|A+A|,|AA|\right\} = \begin{cases}
\Omega^*\left(n^{(4-2\alpha)/3}\right), & \quad 0\le \alpha < 1/8, \\[2mm]
\Omega\left(n^{5/4-\rho}\right), & \quad 1/8\le \alpha<1/3, \\[2mm]
\Omega^*\left(n^{3/2-5\alpha/8}\right), & \quad 1/3\le\alpha<1/2,\\[2mm]
\Omega\left(n^{9/4-39\alpha/16+5\alpha^2/8-\rho}\right), & \quad 1/2\le\alpha<\kappa.
\end{cases}\]
\end{theorem}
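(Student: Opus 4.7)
The plan is to mirror the proof of Theorem \ref{th:DualSP}, carrying every step through the ring isomorphism $\SSS \cong \RR \oplus \RR$ defined by $a_1 + j a_2 \mapsto (a_1 + a_2,\, a_1 - a_2)$. Under this map, both addition and multiplication become coordinate-wise, and the multiplicity hypothesis translates into the statement that the image $A' \subset \RR^2$ meets every horizontal and every vertical line in at most $n^\alpha$ points. Because lines in the double plane $\SSS^2$ decouple, under the same isomorphism, into pairs of independent real lines, a Szemer\'edi--Trotter analog for the double plane (the counterpart of the one the paper develops for the dual plane) follows from two applications of the real Szemer\'edi--Trotter theorem, one per coordinate. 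This incidence bound is the geometric engine for the rest of the argument.

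With this tool in hand I would handle the four ranges of $\alpha$ exactly as in Theorem \ref{th:DualSP}. For $0 \le \alpha < 1/8$ I would adapt Solymosi's ordered-slope argument: the ratio set $A'/A'$ partitions $A' \times A'$ into collinear clusters through the origin whose sizes are controlled by $n^\alpha$, and a convexity-type bound on consecutive slopes yields $\Omega^*(n^{(4-2\alpha)/3})$. For $1/8 \le \alpha < 1/3$ and $1/3 \le \alpha < 1/2$ I would run the extended Elekes technique: encode the sum-product data as a point-line configuration in $\SSS^2$ whose parameters are controlled by $|A+A|$ and $|AA|$, apply the new double-plane incidence bound, and invert to obtain a sum-product estimate. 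The bootstrapping step, in which the bound from the first Elekes pass is fed back into a second application of the same argument, transfers unchanged because multiplication is coordinate-wise and the multiplicity condition is respected by both coordinate projections.

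The remaining range $1/2 \le \alpha < \kappa$ falls outside Elekes' reach in both settings. I would dispose of it by naive subsampling: remove elements of $A$ until the multiplicity drops to $n^{1/2 - \rho'}$ for a suitable $\rho' > 0$, and then feed the reduced set into the $1/3 \le \alpha < 1/2$ bound just established. Optimising $\rho'$ produces the quadratic exponent $9/4 - 39\alpha/16 + 5\alpha^2/8$, and $\kappa = (39 - \sqrt{721})/20$ is precisely the root at which this exponent drops to $1$, explaining why the theorem ends there.

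The main obstacle will be verifying that the extended Elekes argument really does transfer under the isomorphism $\SSS \cong \RR \oplus \RR$. In the dual case the multiplication $(a_1 + \eps a_2)(b_1 + \eps b_2) = a_1 b_1 + \eps(a_1 b_2 + a_2 b_1)$ is lower triangular, and the "real part" acts as the canonical fibre coordinate in Elekes' count. For double numbers the analogous role is split between the two coordinates $a_1 + a_2$ and $a_1 - a_2$, and one must check that the collinearity and fibre counts used inside each of the nested Elekes applications remain correct in this diagonal, coordinate-wise setting. Once this bookkeeping is in place, the arithmetic of the exponents matches the dual case verbatim and the four claimed bounds follow.
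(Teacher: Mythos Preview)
Your overall plan---Solymosi for $\alpha<1/8$, an extended Elekes argument for the middle ranges, and subsampling for $\alpha\ge 1/2$---matches the paper, and the isomorphism $\SSS\cong\RR\oplus\RR$ you describe is precisely the structure the paper exploits through the functions $\Delta^{\pm}(a)=a_1\pm a_2$.

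There is, however, a real gap in your middle step. First, the double-plane Szemer\'edi--Trotter analog does not follow from ``two applications of real Szemer\'edi--Trotter, one per coordinate'': under the isomorphism a line in $\SSS^2$ becomes a product $\ell^+\times\ell^-\subset\RR^2\times\RR^2$, but neither the Elekes point set $\pts=(A+A)\times(AA)$ nor the line set $\lines=\{y=c(x-d):c,d\in A\}$ is a Cartesian product over the two $\RR^2$ factors, so the incidence count does not decouple. Second, and more seriously, even the correct double-plane bound (Theorem~\ref{th:DoubleST}) is too weak to serve as the engine you want. In the Elekes configuration the slope of a line is $c$, so for each real $r$ at most $n^{1+\alpha}$ lines of $\lines$ satisfy $\Delta^+(c)=r$; with $|\lines|=n^2$ this means the line set has multiplicity exponent $(1+\alpha)/2$. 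Plugging Theorem~\ref{th:DoubleST} into $I(\pts,\lines)\ge n^3$ then yields only $\max\{|A+A|,|AA|\}=\Omega(n^{1-\alpha/4})$, far short of $n^{5/4-\rho}$ or $n^{3/2-5\alpha/8}$.

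What the paper actually does is bypass any black-box $\SSS^2$ incidence bound. It views $\SSS^2$ as $\RR^4$ and splits the incidences into \emph{standard} ones (generic in the sense of Theorem~\ref{th:IncR4}) and \emph{special} ones (arising from pairs of lines with infinite common intersection). Solymosi--Tao handles the standard incidences and already gives $n^{5/4-\rho}$. The special incidences require the structural classification of positive and negative line families (Lemmas~\ref{le:LineFamilyDoub}--\ref{le:OppositeSigns}), a dyadic pigeonholing over four parameters $\alpha',\beta,\gamma,\delta$, and an application of real Szemer\'edi--Trotter inside the $2$-flat attached to each point parameter; this is what produces $n^{3/2-5\alpha/8}$. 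Your last paragraph correctly flags this transfer as the main obstacle, but the resolution is this line-family machinery, not a decoupled incidence bound.
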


While Theorem \ref{th:DoubleSP} contains the same bounds as Theorem \ref{th:DualSP}, the proof of Theorem \ref{th:DoubleSP} is more involved.
In some sense it is easier to study dual numbers than double numbers.
That is why we first prove Theorem \ref{th:DualSP} in Section \ref{sec:Dual}, and then prove Theorem \ref{th:DoubleSP} in Section \ref{sec:Double}.

\parag{Point-line incidences.}
Our sum-product technique requires studying point-line incidences in the plane.
Thus, we first study analogs of the Szemer\'edi--Trotter theorem in $\DD^2$ and in $\SSS^2$.

Given a set $\pts$ of points and a set $\lines$ of lines in $\RR^2$, an \emph{incidence} is a pair $(p,\ell)\in \pts \times \lines$ such that the point $p$ is contained in the line $\ell$.
The number of incidences in $\pts\times\lines$ is denoted as $I(\pts,\lines)$.

\begin{theorem}[The Szemer\'edi-Trotter theorem \cite{ST83}] \label{th:ST83}
Let $\pts$ be a set of $m$ points and let $\lines$ be a set of $n$ lines, both in $\RR^2$.
Then
\[ I(\pts,\lines)=O\left(m^{2/3}n^{2/3}+m+n\right). \]
\end{theorem}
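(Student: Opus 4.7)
The plan is to give the Szemer\'edi--Trotter bound via Sz\'ekely's crossing-number argument, which is short, robust, and sets up the planar intuition we will need when moving to $\DD^2$ and $\SSS^2$. First I would reduce to the case where every line in $\lines$ contains at least two points of $\pts$: lines carrying at most one incidence contribute at most $n$ to $I(\pts,\lines)$ in total, which is absorbed by the additive $n$ in the bound.

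Next I would build a topological graph $G$ drawn in $\RR^2$ whose vertex set is $\pts$. For each line $\ell \in \lines$ with $k_\ell \ge 2$ incident points, I walk along $\ell$ and connect consecutive incident points by a straight edge. Then $|V(G)| = m$ and
\[ |E(G)| \;=\; \sum_{\ell\,:\,k_\ell \ge 2}(k_\ell - 1) \;\ge\; I(\pts,\lines) - n. \]
In this drawing, every edge lies on exactly one of the $n$ lines, and two distinct lines meet in at most one point, so the number of edge crossings satisfies $\mathrm{cr}(G) \le \binom{n}{2} < n^2/2$.

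Now I would invoke the crossing lemma of Ajtai--Chv\'atal--Newborn--Szemer\'edi: for any simple graph with $|E|\ge 4|V|$, $\mathrm{cr}(G) \ge |E|^3/(64|V|^2)$. Split into two cases. If $|E(G)| < 4m$, then $I(\pts,\lines) < 4m + n = O(m+n)$. Otherwise,
\[ \frac{(I(\pts,\lines) - n)^3}{64 m^2} \;\le\; \mathrm{cr}(G) \;\le\; \frac{n^2}{2}, \]
which rearranges to $I(\pts,\lines) - n \le 32^{1/3}\, m^{2/3} n^{2/3}$. Combining the two cases yields $I(\pts,\lines) = O(m^{2/3}n^{2/3} + m + n)$, as required.

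The only non-elementary input is the crossing lemma, which I would cite rather than reprove; beyond that, the main subtlety is verifying that $G$ is drawn as a simple topological graph (no two edges share more than an endpoint and no loops occur), which follows because distinct lines intersect in at most one point and each edge is a segment of a unique line. Once that bookkeeping is settled, the two-case split against the crossing lemma immediately yields the stated bound.
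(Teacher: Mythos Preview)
Your argument is correct: this is Sz\'ekely's crossing-number proof, and the bookkeeping you outline (discarding lines with at most one point, checking simplicity of $G$, bounding crossings by $\binom{n}{2}$, and splitting on whether $|E(G)|\ge 4m$) is exactly what is needed. One minor point worth making explicit is that $G$ is simple as an abstract graph, not just as a drawing: two vertices cannot be joined by two edges, since that would force two distinct lines through the same pair of points.

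As for comparison with the paper: there is nothing to compare. The paper does not prove Theorem~\ref{th:ST83}; it quotes the Szemer\'edi--Trotter bound as a classical result with a citation to \cite{ST83} and uses it as a black box (together with its dual formulation, Theorem~\ref{th:DualFormST}, and the $\RR^4$ variant, Theorem~\ref{th:IncR4}). So your write-up supplies strictly more than the paper does here. If you want to match the paper's level of detail, a one-line citation suffices; if you want a self-contained proof, what you have is the standard modern one and is fine as written.
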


As shown in \cite{ST12,Toth14,Zahl16}, Theorem \ref{th:ST83} still holds when replacing $\RR^2$ with $\CC^2$.
A common approach for incidences in the complex plane is to think of $\CC^2$ as $\RR^4$, obtaining an incidence problem between points and two-dimensional planes.
In particular, the following is a special case of a result of Solymosi and Tao \cite{ST12}.
Consider a point set $\pts$ and a set $\planes$ of two-dimensional planes, both in $\RR^4$.
We say that an incidence $(p,h) \in \pts\times \planes$ is \emph{generic} if there is no additional incidence $(p,h') \in \pts\times \planes$ such that $h\cap h'$ is a line.
That is, two planes that form generic incidences with the same point do not have any other intersection points.

\begin{theorem} \label{th:IncR4}
Let $\pts$ be a set of $m$ points and let $\planes$ be a set of $n$ arbitrary two-dimensional planes, both in $\RR^4$.
Then for every $\rho>0$, the number of generic incidences in $\pts\times \planes$ is
\[ O\left(m^{2/3+\rho}n^{2/3}+m+n\right). \]
\end{theorem}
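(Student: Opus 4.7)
I would follow the polynomial partitioning approach of Solymosi--Tao, proving the bound by induction on $m+n$ under the slightly stronger inductive hypothesis $I(\pts,\planes) \le C_\rho (m^{2/3+\rho}n^{2/3} + m + n)$. The extra $\rho$ in the exponent is essential slack: it absorbs the constant-factor losses that accrue across the $\Theta(\log \log m)$ levels of recursion, while the additive $Cm+Cn$ term swallows the base cases in which one of $m,n$ is bounded or one side dominates.

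For the inductive step, fix a parameter $D$ and apply the Guth--Katz polynomial partitioning theorem in $\RR^4$ to obtain a nonzero polynomial $f$ of degree at most $D$ whose zero set $Z(f)$ carves $\RR^4 \setminus Z(f)$ into $O(D^4)$ open cells, each meeting $O(m/D^4)$ points of $\pts$. Split $\pts = \pts_0 \sqcup \pts_1$ with $\pts_0 := \pts \cap Z(f)$, and $\planes = \planes_0 \sqcup \planes_1$ where $\planes_0$ consists of those planes that lie inside $Z(f)$. A plane $h \in \planes_1$ meets $Z(f)$ in a plane curve of degree at most $D$, and a Harnack--Milnor--Thom argument shows that such a curve crosses $O(D^2)$ cells. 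Applying the inductive hypothesis cell by cell and summing via H\"older's inequality yields, after optimizing $D$ as a small power of $n^2/m$, the target bound $O(m^{2/3+\rho}n^{2/3} + m + n)$ on the incidences involving $\pts_1$.

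The main obstacle is bounding the incidences that involve $\pts_0$, and this is precisely where the genericity hypothesis does its work. For a generic incidence $(p,h)$ with $p \in \pts_0$, the hypothesis forbids any other plane $h' \in \planes$ passing through $p$ from meeting $h$ along an entire line, so each such $h'$ contributes only $O(1)$ incidences to the curve $h \cap Z(f)$ (or to $h$ itself when $h \in \planes_0$). Consequently the relevant configuration is equivalent to a system of two-flats inside the three-dimensional real variety $Z(f)$ in which no two flats share a line, and one can close the argument either by invoking a Szemer\'edi--Trotter-type bound for points and two-flats inside a bounded-degree hypersurface, or by running a further round of polynomial partitioning restricted to $Z(f)$ --- carefully handling flats that degenerate into the singular locus. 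The polynomial loss absorbed by $\rho$ is what allows this secondary recursion through the variety to terminate without inflating the exponent, which is why the $n^\rho$ factor cannot be removed by the present method.
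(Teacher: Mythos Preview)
The paper does not give its own proof of this theorem: it is stated there as ``a special case of a result of Solymosi and Tao \cite{ST12}'' and is simply quoted as a black box. So there is nothing in the paper to compare your argument against beyond the citation itself.

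That said, your outline is exactly the Solymosi--Tao polynomial partitioning scheme, and as a high-level sketch it is correct. A couple of small calibration points: in \cite{ST12} the degree $D$ is taken to be a large \emph{constant} depending only on $\rho$, not a power of $n^2/m$, and the recursion therefore has depth $\Theta(\log m)$ rather than $\Theta(\log\log m)$; the $m^{\rho}$ slack absorbs the constant-factor loss incurred at each of these $\Theta(\log m)$ levels. Your description of the cellular part and of why the genericity hypothesis is exactly what is needed on the zero set $Z(f)$ is accurate: Solymosi and Tao's transversality hypothesis (that any two flats meet in $O(1)$ points) is invoked only locally at incidence points during the $Z(f)$ analysis, so restricting attention to generic incidences supplies precisely that local information, and the proof goes through unchanged to bound just those incidences.
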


As we will see below, Theorem \ref{th:ST83} cannot be extended to $\DD^2$ and to $\SSS^2$.
In either case, one can construct a configuration of $m$ points and $n$ lines with $mn$ incidences.
As in the sum-product problem, this maximum number of point-line incidences is controlled by the notion of multiplicity.

We begin with the case of the dual plane.
There are several different ways to define the multiplicity in a point-line incidence problem in $\DD^2$, and we only present one example here.
One may use Lemma \ref{le:LineFamilyDual} to obtain other similar results.
Let $\lines$ be a set of $n$ lines in $\DD^2$, each defined by an equation of the form $y=ax+b$ with $a,b\in \DD$.
We say that $\lines$ has multiplicity $k$ if for every $r\in \RR$, at most $k$ lines of $\lines$ satisfy $a_1=r$.
We also let $\lines$ contain any number of lines of the form $x=b$, without this affecting the multiplicity of the set.
(Our incidence bound also holds when allowing $\lines$ to contain $k$ lines of the form $ax=b$ with $a_1=0$.)

\begin{theorem} \label{th:DualST}
Let $\pts$ be a set of $m$ points and let $\lines$ be a set of $n$ lines, both in $\DD^2$.
Let $\lines$ have multiplicity $n^\alpha$ for some $0\le \alpha \le 1$.
Then for every $\rho>0$,
\[ I(\pts,\lines)=O\left(m^{2/3+\rho}n^{(2+\alpha)/3}+mn^{\alpha}+n\right). \]
\end{theorem}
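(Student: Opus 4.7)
The plan is to lift the problem to $\RR^4$ and combine Theorem~\ref{th:IncR4} with a more elementary Szemer\'edi--Trotter argument bundle-by-bundle. First I identify $\DD^2$ with $\RR^4$ via the real and imaginary parts $(x_1,x_2,y_1,y_2)$. A line $y=ax+b$ with $a=a_1+\eps a_2$ and $b=b_1+\eps b_2$ is cut out by $y_1=a_1x_1+b_1$ and $y_2=a_1x_2+a_2x_1+b_2$, so it is a 2-flat in $\RR^4$. A short calculation (using $\eps^2=0$) shows that two such 2-flats intersect in a real line if and only if they share the same pair $(a_1,b_1)$; otherwise they meet in a point or not at all. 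Lines of the form $x=b$ fit into this framework with minor modification.

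Next I partition $\lines$ into \emph{bundles} indexed by the common $(a_1,b_1)$. The multiplicity hypothesis forces each bundle to have size at most $n^{\alpha}$. I call an incidence $(p,\ell)$ \emph{generic} if no other line in the bundle of $\ell$ passes through $p$, and \emph{non-generic} otherwise; these are exactly the notions of Theorem~\ref{th:IncR4}. Thus the total is $I_g+I_{ng}$ with $I_g=O(m^{2/3+\rho}n^{2/3}+m+n)$ from Solymosi--Tao applied directly to the $n$ 2-flats. So it remains to control $I_{ng}$, which is exactly $\sum_B I(\pts_B,B)$ minus the generic contributions within each bundle, where $\pts_B\subseteq\pts$ is the set of points lying on the 3-flat $y_1=r x_1+s$ attached to the bundle $B$ with parameters $(r,s)$.

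Within a fixed bundle $B$, every 2-flat contains the common direction $(0,1,0,r)$. Projecting $\RR^4$ along this direction down to the plane $(u,v)=(x_1,\,y_2-r x_2)$ sends each line in $B$ to a distinct affine line $v=a_2 u+b_2$ and preserves incidences. So $I(\pts_B,B)$ equals a planar point-line incidence count with $|B|\leq n^{\alpha}$ lines and (potentially multiply-covered) $|\pts_B|$ points; Szemer\'edi--Trotter yields $I(\pts_B,B)=O(m_B^{2/3}|B|^{2/3}+m_B+|B|)$, with $m_B$ the number of distinct projected points (handling the multiset issue via the multiplicity hypothesis, which limits how many $\pts$ points can agree under the projection). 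Summing over bundles $B$, I use Hölder together with an auxiliary Szemer\'edi--Trotter bound on the \emph{outer} incidence problem between the real projections of $\pts$ in the $(x_1,y_1)$-plane and the at most $n$ distinct bundle lines, and then absorb the $|B|$ and $m_B$ terms against $mn^{\alpha}$ and $n$ respectively. The main terms combine to give the bound in the theorem.

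The main obstacle is calibrating the two levels of incidence so that the Hölder step produces exactly $m^{2/3+\rho}n^{(2+\alpha)/3}$ rather than a weaker exponent. Concretely, the delicate points are (i) ensuring that each $\pts$-point is effectively counted in at most $n^{\alpha}$ bundles (which feeds the $mn^{\alpha}$ term), (ii) using the multiplicity hypothesis once more to control collisions of $\pts$ under the within-bundle projection, so the per-bundle Szemer\'edi--Trotter estimate is legitimate rather than multiset-inflated, and (iii) verifying that the exceptional $x=b$ lines, and bundles of size one (which contribute only generic incidences, already absorbed by Theorem~\ref{th:IncR4}), do not force an extra additive loss. Once these are dispatched, summing the bundle estimates matches the claimed bound.
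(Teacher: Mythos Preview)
Your approach has a genuine gap at step (ii). The multiplicity hypothesis in Theorem~\ref{th:DualST} is a condition on the \emph{lines} only: for every $r\in\RR$, at most $n^\alpha$ lines have slope with real part $a_1=r$. There is no hypothesis whatsoever on the point set $\pts$, so nothing limits how many points of $\pts$ collapse under your within-bundle projection along $(0,1,0,r)$. Concretely, take the bundle with $(a_1,b_1)=(1,0)$, the $n^\alpha$ lines $y=(1+a_2\eps)x - a_2\eps$ for $a_2=1,\ldots,n^\alpha$, and the $m$ points $(1+t\eps,\,1+t\eps)$ for $t=1,\ldots,m$. Every point lies on every line, so there are $mn^\alpha$ incidences in this bundle; yet all $m$ points project to the single point $(u,v)=(1,0)$, and your per-bundle Szemer\'edi--Trotter count records only $n^\alpha$ incidences. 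The sentence ``handling the multiset issue via the multiplicity hypothesis, which limits how many $\pts$ points can agree under the projection'' is therefore not justified, and the bundle-by-bundle estimate does not go through as written.

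The paper's argument sidesteps this entirely by partitioning in the opposite direction. Rather than grouping lines that share $(a_1,b_1)$, one greedily splits $\lines$ into $n^\alpha$ subsets $\lines_1,\ldots,\lines_{n^\alpha}$ in each of which all the $a_1$-values are \emph{distinct} (the multiplicity hypothesis makes this possible). By Lemma~\ref{le:LineFamilyDual}(a), two lines with different $a_1$ meet in at most one point of $\DD^2$, so within each $\lines_j$ every incidence is generic and Theorem~\ref{th:IncR4} applies directly to $(\pts,\lines_j)$. Summing and using H\"older,
\[
\sum_{j=1}^{n^\alpha} n_j^{2/3} \le (n^\alpha)^{1/3}\Bigl(\sum_j n_j\Bigr)^{2/3} = n^{(2+\alpha)/3},
\]
gives the stated bound with no projection, no inner/outer two-level structure, and no appeal to point multiplicity. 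Your generic/non-generic split and the single global application of Theorem~\ref{th:IncR4} are thus unnecessary; one simply applies it $n^\alpha$ times to carefully chosen subfamilies.
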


As we see in Section \ref{ssec:DualLines}, the term $mn^{\alpha}$ is tight and cannot be removed from the bound of Theorem \ref{th:DualST}.
We can also obtain a construction with $m^{2/3}n^{2/3}$ incidences.
The $\rho$ in the bound is almost certainly redundant.
It is much less clear what should be the correct dependency in $\alpha$ in $m^{2/3+\rho}n^{2/3+\alpha/3}$.

We obtain similar incidence results in the double plane $\SSS^2$.
As in the dual case, there are several different ways to define the multiplicity of a point-line incidence problem in $\SSS^2$, and we only present one example.
Let $\lines$ be a set of $n$ lines in $\SSS^2$, each defined by an equation of the form $y=ax+b$ with $a,b\in \SSS$.
We say that $\lines$ has multiplicity $k$ if for every $r\in \RR$, at most $k$ lines of $\lines$ satisfy $a_1+a_2=r$ and at most $k$ such lines satisfy $a_1-a_2=r$.
We also let $\lines$ contain any number of lines of the form $x=b$, without this affecting the multiplicity of the set.
(Our incidence bound still holds also when allowing $\lines$ to contain $k$ lines of the form $ax=b$ with non-invertible $a$.)

\begin{theorem} \label{th:DoubleST}
Let $\pts$ be a set of $m$ points and let $\lines$ be a set of $n$ lines, both in $\SSS^2$.
Let $\lines$ have multiplicity $n^\alpha$ for some $0\le \alpha \le 1$.
Then for every $\rho>0$,
\[ I(\pts,\lines)=O\left(m^{2/3+\rho}n^{(2+\alpha)/3}+mn^{\alpha}+n\right). \]
\end{theorem}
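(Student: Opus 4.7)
The plan is to reduce the $\SSS^2$ incidence problem to a real incidence problem in $\RR^4$ by exploiting the algebraic splitting of the double numbers. Since $\SSS$ admits the \emph{idempotent basis} $e_\pm = (1\pm j)/2$ with $e_+^2 = e_+$, $e_-^2 = e_-$, and $e_+ e_- = 0$, writing $a = a_1 + j a_2 = (a_1+a_2)e_+ + (a_1-a_2)e_-$ turns multiplication in $\SSS$ into componentwise multiplication on the two real coordinates $a_\pm = a_1 \pm a_2$. I would identify $\SSS^2$ with $\RR^2_+\times\RR^2_-=\RR^4$ via $(x,y)\mapsto(x_+,y_+,x_-,y_-)$. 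Under this identification every line $y=ax+b$ in $\SSS^2$ becomes a product $2$-plane $L_+\times L_-\subset\RR^4$, where $L_\pm$ is the real line $y_\pm = a_\pm x_\pm + b_\pm$ in the $(x_\pm,y_\pm)$-plane. A direct check shows that two such product planes intersect in a full real line if and only if they share exactly one of the two factors, so the multiplicity hypothesis translates to the statement that for any line $L_+$ at most $n^\alpha$ planes of $\lines$ have $L_+$ as their first factor, with the symmetric bound for $L_-$.

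With this set-up I would split $I(\pts,\lines)$ into generic incidences (in the sense of Theorem~\ref{th:IncR4}) and non-generic ones. Theorem~\ref{th:IncR4} applied directly to $\pts$ and $\lines$ viewed inside $\RR^4$ bounds the generic part by $O(m^{2/3+\rho}n^{2/3}+m+n)$, which is absorbed into the target bound. The non-generic incidences split by symmetry into type-$+$, where the witness line through $p$ shares the first factor with $\ell$, and the analogous type-$-$; it is enough to bound type-$+$.

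For the type-$+$ bound I would group $\lines$ by the first factor. For each $L_+$ the class $C(L_+)$ has at most $n^\alpha$ planes, all of the form $L_+\times L_-^{(i)}$, and an incidence $(p,\ell)$ with $\ell\in C(L_+)$ is type-$+$ non-generic exactly when the $\RR^2_-$-projection of $p$ lies on at least two of the lines $L_-^{(i)}$. Inside $C(L_+)$ this reduces to a weighted planar incidence problem in $\RR^2_-$ between at most $n^\alpha$ lines and the projections of the points of $\pts$ whose first coordinate lies on $L_+$, where each projected point carries a weight equal to the size of its fibre. Restricting to heavy projected points and combining the heavy-point estimate with Szemer\'edi--Trotter in $\RR^2_-$, then summing over $L_+\in\lines_+$ and using Szemer\'edi--Trotter applied to the projected configuration $(\pts_+,\lines_+)$ in $\RR^2_+$ to control the total weight, should yield the non-generic contribution $O(m^{2/3+\rho}n^{(2+\alpha)/3}+mn^\alpha+n)$.

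The main obstacle I expect is the bookkeeping of fibre weights. Multiple points of $\pts$ can project to the same point of $\RR^2_-$, so neither Szemer\'edi--Trotter nor the heavy-point bound applies off the shelf; I would handle this by a dyadic decomposition of $\pts$ according to fibre size, apply the planar bounds at each scale, and sum the contributions using H\"older's inequality. Extracting the exponent $(2+\alpha)/3$ rather than the weaker $2/3+\alpha$ that falls out of a crude partition argument is the delicate step, and it has to be performed on both the $L_+$ and $L_-$ projections because the multiplicity hypothesis treats the two factors symmetrically. This two-sided weighted analysis is precisely what makes Theorem~\ref{th:DoubleST} more involved than its dual counterpart Theorem~\ref{th:DualST}, where only the ``real part'' direction plays a nontrivial role.
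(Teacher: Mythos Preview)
Your idempotent-basis picture is correct and is a clean way to see the structure: under $a\mapsto(a_+,a_-)=(a_1+a_2,a_1-a_2)$ the ring $\SSS$ becomes $\RR\times\RR$, a line $y=ax+b$ becomes a product $2$-flat $L_+\times L_-$ in $\RR^4$, two such flats meet in a real line iff they share exactly one factor, and the multiplicity hypothesis says each fibre of both projections $\lines\to\lines_\pm$ has size at most $n^\alpha$. This is essentially a repackaging of Lemma~\ref{le:LineFamilyDoub}.

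From this point, however, you take a much harder route than the paper, and your final sentence is simply wrong: the paper states that the proof of Theorem~\ref{th:DoubleST} is \emph{identical} to that of Theorem~\ref{th:DualST}. There is no generic/non-generic split and no weighted planar analysis. Instead one partitions $\lines$ into $O(n^\alpha)$ subfamilies $\lines_1,\dots,\lines_{O(n^\alpha)}$ each of multiplicity one; in your language, within each $\lines_j$ all $L_+$ factors are distinct and all $L_-$ factors are distinct (a greedy colouring suffices, since each line conflicts with at most $2(n^\alpha-1)$ others). Then any two planes in $\lines_j$ share neither factor and hence meet in at most a point, so every incidence in $\pts\times\lines_j$ is generic and Theorem~\ref{th:IncR4} gives $I(\pts,\lines_j)=O(m^{2/3+\rho}n_j^{2/3}+m+n_j)$. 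Summing over $j$ and applying H\"older, $\sum_j n_j^{2/3}\le (n^\alpha)^{1/3}\bigl(\sum_j n_j\bigr)^{2/3}=n^{(2+\alpha)/3}$, which is exactly the stated bound. The step you call ``delicate'' --- extracting $(2+\alpha)/3$ rather than $2/3+\alpha$ --- is handled by this single application of H\"older and nothing else.

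Your alternative route is not obviously salvageable as written. The Szemer\'edi--Trotter theorem says nothing about multisets of points, and both of your proposed planar applications (inside each $C(L_+)$ on $\RR^2_-$, and on $(\pts_+,\lines_+)$ to control $\sum_{L_+}m_{L_+}$) are to projected point sets that carry weights with no a priori control. A dyadic decomposition on fibre size does not by itself convert a weighted incidence sum $\sum_q w(q)r(q)$ into a Szemer\'edi--Trotter-type bound; the extreme case where all weight sits on a single highly rich point already forces the full $mn^\alpha$ term, and balancing the remaining ranges against the target $m^{2/3+\rho}n^{(2+\alpha)/3}$ would require an argument you have not supplied. The partition-then-H\"older proof sidesteps all of this.
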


As in the dual plane, the term $mn^{\alpha}$ is tight and cannot be removed from the bound of Theorem \ref{th:DualST}.
We can also obtain a construction with $m^{2/3}n^{2/3}$ incidences.

The Szemer\'edi-Trotter theorem (Theorem \ref{th:ST83}) is considered to have a dual formulation, in the sense that there is a simple combinatorial argument for moving between one formulation and the other.
Given a set of lines $\lines$, we say that a point of $p$ is $r$\emph{-rich} if $p$ is incident to at least $r$ lines of $\lines$.

\begin{theorem}[Dual Szemer\'edi-Trotter] \label{th:DualFormST}
Let $\lines$ be a set of $n$ lines in $\RR^2$, and let $r$ be a positive integer.
Then the number of $r$-rich points of $\lines$ is $O\left(n^2/r^3 + n/r \right)$.
\end{theorem}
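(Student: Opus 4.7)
The plan is to derive Theorem \ref{th:DualFormST} as a short combinatorial consequence of the primal Szemer\'edi--Trotter theorem (Theorem \ref{th:ST83}). First I would let $\pts_r \subseteq \RR^2$ denote the set of $r$-rich points determined by $\lines$, and use the definition of richness to obtain the lower bound $I(\pts_r,\lines) \ge r\,|\pts_r|$. Applying Theorem \ref{th:ST83} to the pair $(\pts_r,\lines)$ then yields the upper bound $I(\pts_r,\lines) = O\!\left(|\pts_r|^{2/3} n^{2/3} + |\pts_r| + n\right)$, and combining the two inequalities gives
\[ r\,|\pts_r| \;\le\; C\!\left(|\pts_r|^{2/3} n^{2/3} + |\pts_r| + n\right) \]
for some absolute constant $C$ coming from Theorem \ref{th:ST83}.

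The next step is a standard case split. When $r \ge 2C$, the middle term $C|\pts_r|$ can be absorbed into the left-hand side, leaving $(r/2)\,|\pts_r| \le C\,|\pts_r|^{2/3}n^{2/3} + Cn$. Either the first term on the right dominates, in which case isolating $|\pts_r|$ produces $|\pts_r| = O(n^2/r^3)$, or the term $Cn$ dominates and one immediately obtains $|\pts_r| = O(n/r)$. For the remaining small range $r < 2C$, the claimed bound $n^2/r^3 + n/r$ is already $\Omega(n^2)$, which trivially exceeds the total number $\binom{n}{2}$ of intersection points determined by $\lines$, so the conclusion holds automatically.

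There is no genuine obstacle here beyond tracking constants carefully enough to justify the absorption step. The only mild subtlety is verifying that the linear term $|\pts_r|$ appearing in Theorem \ref{th:ST83} does not contribute an extra additive term of the form $n/r^0$ in the final bound; this is exactly why the absorption argument is performed before splitting into the two remaining terms, and why the conclusion takes the clean form $n^2/r^3 + n/r$.
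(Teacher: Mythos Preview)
Your argument is correct and is exactly the ``simple combinatorial argument for moving between one formulation and the other'' that the paper alludes to; the paper does not spell out a proof of Theorem~\ref{th:DualFormST} at all, merely stating it as the standard dual form of Theorem~\ref{th:ST83}. Your derivation via the lower bound $I(\pts_r,\lines)\ge r|\pts_r|$, the Szemer\'edi--Trotter upper bound, and the absorption of the linear term is the canonical way to obtain this dual formulation.
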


\parag{More about the multiplicities. }
Both for the sum-product problem and for the incidence problem, we established that the dual and double variants behave quite differently than the real, complex, and quaternion cases.
An obvious possible explanation for this difference is that $\DD$ and $\SSS$ are not fields.
But these are not fields only because some degenerate numbers have no inverse.
And all of our results hold also when all of the numbers in the problem have inverses.
Moreover, our definitions of multiplicity are not directly about non-invertible elements.

Instead of non-invertible elements in $A$, both definitions of multiplicity ask $A-A$ not to contain many non-invertible elements.
Indeed, in the dual case, $a-a'\in \DD$ is non-invertible when $a_1=a'_1$.
In the double case, $a-a'\in \DD$ is non-invertible when $a_1+a_2=a'_1+a'_2$ or $a_1-a_2=a'_1-a'_2$.
This curious connection between the multiplicity definitions in the dual and double cases might hide a deeper general principle.
In addition, this seems related to a result of Tao \cite[Theorem 5.4]{Tao09}, which holds in a much more general scenario. Vaguely and inaccurately, this result states that a set satisfying $\max\{|A+A|,|AA|\}=\Theta(|A|)$ implies the existence of a linear subspace $V$ of zero-divisors, such that $A$ has a large intersection with a translate of $V$ (see also \cite{KT01}).
This is indeed the situation in our case.
For example, in the dual case $V$ is the line $a_1=0$.
Continuing to expose this hidden principle could potentially be an exciting research front.

\parag{Additional connections to previous sum-product works.}
Similarly to the complex numbers and to the quaternions, one can represent dual and double numbers as matrices.
The standard matrix representations for these numbers are
\[ a_1+\eps a_2 = \left[ {\begin{array}{cc}
   a_1 & a_2 \\
   0 & a_1 \\
  \end{array} } \right] \quad \text{ and } \quad a_1+j a_2 = \left[ {\begin{array}{cc}
   a_1 & a_2 \\
   a_2 & a_1 \\
  \end{array} } \right].
  \]
With these representations, matrix addition and multiplication correspond to the addition and multiplication of dual and double numbers.

With the above matrix representation, our construction of $A\subset \DD$ with $|A+A|=\Theta(|A|)$ and $|AA|=\Theta(|A|)$ corresponds to a construction of Chang \cite{Chang07} for matrices in $\SL(2,\RR)$.
Other papers, such as \cite{SV09,SW17}, study sum-product phenomena for matrices of specific types.
To the best of our knowledge, none of the previous works is relevant to the cases of dual numbers and double numbers.
For example, Theorem 4 of Solymosi and Wong \cite{SW17} depends on the 1-norm of the matrices.
This notion is completely unrelated to our notions of multiplicity.

Recall that when $1/2<\alpha<\kappa$, our sum-product bound for the dual numbers is neither $\Theta^*(n^2)$ nor $\Theta^*(n)$.
A somewhat similar situation was observed before for the sum-product problem in finite fields.
For simplicity, we only consider finite fields $\FF_p$ where $p$ is a prime.
Garaev \cite{Garaev08} constructed a set $A\subset \FF_p$ such that $|A|=\Theta(p^{1/2})$ and $\max\{|A+A|,|AA|\} =O(|A|^{3/2})$.
On the other hand, as shown in \cite{CKM18}, every set $A\subset \FF_p$ with $|A|=O(|A|^{64/117})$ satisfies $\max\{|A+A|,|AA|\} =\Omega(|A|^{39/32})$.

Another elegant argument of Solymosi \cite{Soly05} shows that every finite $A \subset \CC$ satisfies $\max\{|A+A|,|AA|\} =\Omega(|A|^{5/4})$.
The last paragraph of that paper states that ``A similar argument works for quaternions and for other hypercomplex
numbers.''
We now briefly discuss how the current work compares with the results of \cite{Soly05}.
A reader who is not familiar with \cite{Soly05} can safely skip this discussion.

The proof in \cite{Soly05} relies on the standard property that $|a\cdot b| = |a|\cdot |b|$ holds for $a,b\in \CC$ (in particular, this property is used in Lemma 2.1 of \cite{Soly05}).
When working with dual or double numbers, this absolute value property fails when using the standard definition $|a|=\sqrt{a_1^2+a_2^2}$.
In the case of dual numbers, an alternative definition is $|a|=a_1$, which does maintain the property $|a\cdot b| = |a|\cdot |b|$.
When using this definition, a different part of Lemma 2.1 of \cite{Soly05} fails: The claim that no number is covered by more than seven disks.
A similar situation occurs for the double numbers.

Note that the proof of \cite{Soly05} should not hold for dual numbers, since then it would contradict the above construction.
We did manage to get a variant of the argument in \cite{Soly05} to hold for dual and double numbers, while depending on the notion of multiplicity (thus also eliminating the contradiction with the dual construction).
Let $A$ be a set of dual or double numbers with multiplicity $n^\alpha$.
In the proof of Lemma 2.1, instead of being covered by at most 7 disks, no number is covered by more than $2n^\alpha$ ``disks''.
Then, in the definition of good sets one changes the constant 28 with $8n^\alpha$.
Now the proof then holds again, implying the bound $\max\{|A+A|,|AA|\} =\Omega(|A|^{5/4-\alpha/2})$.
It is not difficult to verify that the bounds of Theorem \ref{th:DualSP} and \ref{th:DoubleSP} are stronger for every relevant value of $\alpha$.

\parag{Acknowledgements.}
We would like to thank Misha Rudnev for suggesting this problem, and to Ben Lund, Cosmin Pohoata, and Frank de Zeeuw for helpful discussions.
We would also like to thank J\'ozsef Solymosi --- while he was not even aware of this project, quite a few of his works affected every part of it.

\section{Dual numbers} \label{sec:Dual}

In this section we study dual numbers, and in particular prove Theorem \ref{th:DualSP}.
In Section \ref{ssec:DualLines} we study properties of lines in the dual plane.
We derive a point-line incidence bound in $\DD^2$, and study additional properties of such incidences.
In Section \ref{ssec:ElekDual} we adapt Elekes' sum-product technique to the dual numbers.
As mentioned above, we add several additional steps to Elekes' original argument.
In Section \ref{ssec:SolyDual}, we adapt Solymosi's sum-product argument to the dual numbers.

\subsection{Lines in the dual plane} \label{ssec:DualLines}

Recall that we denote by $\DD$ the set of dual numbers: The extension of $\RR$ with the extra element $\eps$ and the rule $\eps^2=0$.
We write a number $a\in \DD$ as $a_1+\eps a_2$.
Multiplication of dual numbers is commutative, and 1 is the unit element.
A dual number $a\in \DD$ has an inverse element if and only if $a_1\neq 0$.
The inverse element is then $a^{-1} = \frac{a_1-\eps a_2}{a_1^2}$.
Indeed, we have
\[ a\cdot a^{-1} = \frac{(a_1+\eps a_2)(a_1-\eps a_2)}{a_1^2} = \frac{a_1^2}{a_1^2} = 1.\]

We define a line in $\DD^2$ as the set of points on which a linear equation vanishes.
Let $\ell$ be the line defined by $ax+by=c$, where $a,b,c\in \DD$.
This corresponds to
\begin{align*}
(a_1+\eps a_2) (x_1+\eps x_2) + (b_1+\eps b_2)(y_1+\eps y_2) &= (c_1+\eps c_2),
\end{align*}
or equivalently
\begin{align*}
&a_1x_1 \hspace{12.5mm} +  b_1y_1 &= c_1, \\
&a_2x_1 + a_1 x_2 + b_2 y_1 + b_1 y_2 &= c_2.
\end{align*}

When $a_1=b_1=0$, the first equation becomes trivial while the second still exists.
In any other case, the two equations are linearly independent.
We can think of $\DD^2$ as $\RR^4$, and then $\ell$ is either a 2-flat or a hyperplane, depending on whether $a_1=b_1=0$.
We refer to the lines of the latter type as \emph{degenerate lines}.
Note that a line defined by $ax+by=c$ is degenerate if and only if both $a$ and $b$ are non-invertible.

In the real and complex planes, any two lines intersect in at most one point.
In $\DD^2$, two lines can have an infinite intersection, even when excluding non-invertible coefficients in the line equations.
For example, consider the set of non-degenerate lines
\[ \lines = \{ (1+m\eps)x+(1-(m-1)\eps)y = 2+ \eps :\ m\in \RR \}. \]

It is not difficult to verify that every line of $\lines$ contains every point of the form $(1+a\eps,1-a\eps)\in \DD^2$ with $a\in \RR$.
By taking $n$ lines from $\lines$ and $m$ points of the form $(1+a\eps,1-a\eps)\in \DD^2$ we get $mn$ incidences.
That is, the point--line incidence problem in $\DD^2$ is trivial.
This remains true when excluding degenerate lines, and also when using only invertible numbers in the definitions of the points and the lines.
We now study when collections of lines have an infinite intersection.

For $a\in \DD$, denote by $\re(a)$ the real part of $a$.
That is, $\re(a_{1}+a_{2}\eps)=a_{1}$.

\begin{lemma} \label{le:LineFamilyDual}$\quad$ \\
(a) Let $\ell$ and $\ell'$ be distinct lines in $\DD^2$, respectively defined by $y=ax+b$ and $y=a'x+b'$.
Then $\ell\cap\ell'$ contains more than one point if and only if $a_1=a'_1$, $b_1=b'_1$, and $a_2\neq a_2'$.
When these conditions are satisfied, $\ell_1\cap\ell_2$ is a line in $\RR^4$ and there exist $r_1,r_2\in \RR$ such that every point $(x,y)\in \ell_1\cap\ell_2$ satisfies $\re(x)=r_1$ and $\re(y)=r_2$. \\
(b) Let $\lines$ be a set of lines of the form $y=ax+b$ that have an infinite common intersection.
Then all of these lines have the same values for $a_1$ and $b_1$, and there exist $r_1,r_2\in \RR$ such that every point $(x,y)$ in the infinite intersection satisfies $\re(x)=r_1$ and $\re(y)=r_2$.
Moreover, either all the $b_2$ values are identical or there exists $m\in \RR$ such that every line satisfies $b_2=x_1(m-a_2)$.
\end{lemma}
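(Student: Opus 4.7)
The plan is to analyze the intersection condition as a linear equation in the ring $\DD$, exploiting the fact that $d\in\DD$ is invertible if and only if $\re(d)\neq 0$.

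For part (a), equating $y=ax+b$ with $y=a'x+b'$ yields $(a-a')x = b'-b$. If $a_1\neq a_1'$ then $a-a'$ is a unit in $\DD$, so $x$ and hence $y$ are uniquely determined and $|\ell\cap\ell'|=1$. If $a_1=a_1'$ then $a-a'=\eps(a_2-a_2')$ and the equation becomes $\eps(a_2-a_2')x_1 = (b_1'-b_1) + \eps(b_2'-b_2)$. The real part forces $b_1=b_1'$. If in addition $a_2=a_2'$, then the distinctness of the lines makes $b_2\neq b_2'$ and the equation has no solution; hence for the intersection to contain more than one point one needs $a_2\neq a_2'$. In that case the $\eps$-coefficient determines $x_1$ uniquely while $x_2$ remains free, so substituting back into $y=ax+b$ writes $y_1$ as a function of $x_1$ alone (hence fixed) and $y_2$ as an affine function of $x_2$. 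Thus $\ell\cap\ell'$ is a real affine line on which $\re(x)$ and $\re(y)$ are constant.

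For part (b), I will apply (a) to arbitrary pairs of lines in $\lines$. Any two distinct lines already share the infinite common intersection, so by (a) they agree on $a_1$ and $b_1$; hence every line of $\lines$ has a common $a_1=a_1^*$ and $b_1=b_1^*$. Part (a) also produces for every pair $(\ell_i,\ell_j)$ constants $r_{ij},r_{ij}'$ such that every point of $\ell_i\cap\ell_j$ has $\re(x)=r_{ij}$ and $\re(y)=r_{ij}'$. Since the common intersection lies in every pairwise intersection and is nonempty, $r_{ij}$ and $r_{ij}'$ do not depend on $(i,j)$; call these values $r_1$ and $r_2$. Next pick two distinct points $(r_1+\eps x_2,\,r_2+\eps y_2)$ and $(r_1+\eps x_2',\,r_2+\eps y_2')$ in the common intersection, which exist by infinitude. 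Substituting $x=r_1+\eps x_2$ into each line equation $\ell_i:y=a_ix+b_i$ gives $y_2 = a_1^*\,x_2 + a_{i,2}\,r_1 + b_{i,2}$, and requiring both points to lie on every line forces $a_{i,2}\,r_1 + b_{i,2}$ to equal a single constant $c$ independent of $i$.

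If $r_1=0$ then $b_{i,2}=c$ for every line, so all $b_2$ values coincide. Otherwise, setting $m=c/r_1\in\RR$ rearranges to $b_{i,2} = r_1(m-a_{i,2})$, which is the stated relation with $x_1=r_1$. The main care needed is organizational rather than conceptual: one must verify that the pairwise $r$-values coincide across all pairs before invoking the two-point substitution argument; once that is in place, the rest is a transparent calculation in the algebra $\RR[\eps]/(\eps^2)$, where the nilpotency of $\eps$ does all of the real work.
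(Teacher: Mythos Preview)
Your proof is correct and follows essentially the same route as the paper: both reduce part~(a) to the equation $(a-a')x=b'-b$, split by whether $a_1=a_1'$, and then read off that $x_1$ is fixed while $x_2$ is free; both handle part~(b) by applying part~(a) pairwise to pin down $a_1,b_1,r_1,r_2$ and then derive the linear relation $a_{i,2}r_1+b_{i,2}=\text{const}$ (the paper writes this as $r_1=(b_2'-b_2)/(a_2-a_2')$, which is the same thing). One minor remark: in your part~(b) a single point of the common intersection already forces $a_{i,2}r_1+b_{i,2}=y_2-a_1^*x_2$ to be the same for all $i$, so the second point is not actually needed.
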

\begin{proof} (a) To study the intersection points of the two lines, we combine $y=ax+b$ and $y=a'x+b'$, obtaining $ax+b=a'x+b'$, or equivalently $x(a-a')=b'-b$.
Splitting this equation into real and imaginary parts gives
\begin{align}
x_1(a_1-a'_1) &= b'_1-b_1, \label{eq:realLineInt} \\
x_1(a_2-a'_2) + x_2(a_1-a'_1) &= b'_2-b_2.  \nonumber
\end{align}

First assume that $a_1\neq a'_1$.
In this case we can rewrite the above system as
\begin{align*}
x_1 &= (b'_1-b_1)/(a_1-a'_1),  \\
x_2 &= (b'_2-b_2-x_1(a_2-a'_2))/(a_1-a'_1).
\end{align*}
Since this system has a unique solution, when $a_1\neq a'_1$ the two lines intersect in a single point.

We next assume that $a_1=a'_1$.
In this case, equation \eqref{eq:realLineInt} implies $b'_1=b_1$.
Then the line equations $y=ax+b$ and $y=a'x+b'$ become
\begin{align*}
y_1 &= a_1x_1 +b_1, \\
y_2 &= a_1x_2+a_2x_1 +b_2, \\
y_2 &= a_1x_2+a'_2x_1 +b'_2.
\end{align*}

Combining the second and third equations of this system gives $a_2x_1 +b_2 = a'_2x_1 +b'_2$.
If $a_2=a'_2$ then the second and third equations of the system imply that either $\ell \cap \ell' = \emptyset$ or $\ell=\ell'$ (depending on whether or not $b_2=b'_2$).
We may thus assume that $a_2\neq a'_2$, to obtain
\[ x_1 = \frac{b'_2-b_2}{a_2-a'_2}, \quad y_1 = a_1\cdot \frac{b'_2-b_2}{a_2-a'_2} +b_1, \quad \text{ and } \quad y_2 = a_1x_2+a_2\cdot \frac{b'_2-b_2}{a_2-a'_2} +b_2. \]

Thus, the intersection $\ell_1\cap\ell_2$ is infinite (it is a line in $\RR^4$).
Moreover, all of the points of $\ell_1\cap\ell_2$ have the same real parts $x_1,y_1$.

(b) By part (a), all the lines in $\lines$ have the same values for $a_1$ and $b_1$, and there exist $r_1,r_2\in \RR$ such that every point $(x,y)$ in the infinite intersection satisfies $\re(x)=r_1$ and $\re(y)=r_2$.
That is, every line of $\lines$ is defined by $y= (a_1+a_2\eps)x+(b_1+b_2\eps)$, where $a_1,b_1\in \RR$ are fixed and $a_2,b_2\in \RR$ change between different lines.
By the proof of part (a), two lines defined by $y= (a_1+\eps a_2)x+(b_1+\eps b_2)$ and $y= (a_1+\eps a'_2)x+(b_1+\eps b'_2)$ satisfy
\[ r_1 = \frac{b'_2-b_2}{a_2-a'_2}. \]

To have every pair of lines of $\lines$ satisfy $b'_2-b_2 = r_1(a_2-a'_2)$, either $r_1 =0$ and then all of the $b_2$ values are identical, or there exists $m\in \RR$ such that every line satisfies $b_2=r_1(m-a_2)$.
\end{proof}

We are now ready to prove Theorem \ref{th:DualST}.
We first recall the statement of this theorem.
\vspace{2mm}

\noindent {\bf Theorem \ref{th:DualST}.}
\emph{Let $\pts$ be a set of $m$ points and let $\lines$ be a set of $n$ lines, both in $\DD^2$.
Let $\lines$ have multiplicity $n^\alpha$ for some $0\le \alpha \le 1$.
Then for every $\rho>0$, }
\[ I(\pts,\lines)=O(m^{2/3+\rho}n^{2/3+\alpha/3}+mn^{\alpha}+n). \]
\begin{proof}
By the definition of multiplicity, the set $\lines$ may contain at most $n^\alpha$ degenerate lines.
Together these lines participate in at most $mn^{\alpha}$ incidences.
Every point of $\pts$ is incident to at most one line of the form $x=b$, so such lines contribute at most $m$ incidences.
It remains to consider incidences with non-degenerate lines of $\lines$ of the form $y=ax+b$.

We discard from $\lines$ the degenerate lines and lines of the form $x=b$.
We can then partition $\lines$ into $n^\alpha$ disjoint subsets $\lines_1,\ldots,\lines_{n^\alpha}$, such that the multiplicity of each $L_j$ is one.
For every $1\le j \le n^\alpha$, set $n_j = |\lines_j|$.
Note that $\sum_{j=1}^{n^\alpha} n_j =n$.
Since each subset has no multiplicity, by Lemma \ref{le:LineFamilyDual} every two lines from the same $\lines_j$ intersect in at most one point.
That is, when thinking of $\DD^2$ as $\RR^4$, the set $\lines_j$ becomes a set of two-dimensional planes, each two intersecting in at most one point. We may thus apply Theorem \ref{th:IncR4} with $\pts$ and $\lines_j$.
Note that in this case every incidence is generic by definition, so Theorem \ref{th:IncR4} gives a bound for the total number of incidences.
By doing that for every $1\le j \le n^\alpha$ and then applying H\"older's inequality, we obtain
\begin{align*}
I(\pts,\lines) &= \sum_{j=1}^{n^\alpha} I(\pts,\lines_j) = \sum_{j=1}^{n^\alpha} O\left(m^{2/3+\rho}n_j^{2/3}+m+n_j\right) \\[2mm]
&= O\left(m^{2/3+\rho}\sum_{j=1}^{n^\alpha} n_j^{2/3}+mn^\alpha+n\right) = O\left(m^{2/3+\rho}n^{2/3+\alpha/3}+mn^\alpha+n\right).
\end{align*}
\end{proof}

For our sum-product results in $\DD$, we need additional properties of point-line incidences in $\DD^2$.
We define the \emph{real part} of $\DD^2$ as the copy of $\RR^2$, and say that a point $(a_1+a_2\eps,b_1+b_2\eps)$ corresponds to the point $(a_1,b_1)$ in the real part of $\DD^2$.
When thinking of $\DD^2$ as $\RR^4$, the real part of $\DD^2$ is the projection of $\RR^4$ to the two real coordinates.
Thus, each point in the real part of $\DD^2$ has an \emph{imaginary plane} associated with it, which is also copy of $\RR^2$.
For example, the point $(1+2\eps,3+4\eps)\in \DD^2$ is the point $(2,4)$ in the imaginary plane associated with the point $(1,3)$ in the real part of $\DD^2$.

We refer to a set of lines in $\DD^2$ with an infinite common intersection as a \emph{line family}.
Let $\lines$ be such a line family.
By Lemma \ref{le:LineFamilyDual}(b), every line of $\lines$ corresponds to the same line in the real part of $\DD^2$.
We refer to this line as the \emph{real line} of $\lines$.
By the same lemma, the infinite intersection of the lines of $\lines$ is contained in a single point of the real part of $\DD^2$.
That is, this intersection is a line in the imaginary plane associated with a single point $p$ in the real part of $\DD^2$.
We say that $p$ is the \emph{special point} of the line family $\lines$.

Let $\ell \subset \RR^2$ be a line in the real part of $\DD^2$.
Then there could be several line families whose real part is $\ell$.
In addition, a line in $\DD^2$ whose real part is $\ell$ can participate in many line families that that have $\ell$ as their real line.
For example, the line defined by $y=(1+2\eps)x+(3+4\eps)$ is contained in the real line $y=x+3$ and is part of every family defined by $a_1=1, b_1=3,$ and $b_2 = x_1(m-a_2)$ such that $4= x_1(m-2)$ (see Lemma \ref{le:LineFamilyDual}(b)).
We now study the interaction between line families that have the same real line.

\begin{lemma} \label{le:LineFamProp}
Let $\lines_1$ and $\lines_2$ be two distinct line families in $\DD^2$ that correspond to the same real line $\ell$.
Assume that $\ell$ is not parallel to the $y$-axis. \\
(a) If $\lines_1$ and $\lines_2$ have the same special point then they have no lines in common. \\
(b) If $\lines_1$ and $\lines_2$ have different special points then they have at most one line in common.
\end{lemma}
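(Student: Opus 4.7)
The plan is to parameterize all lines in $\DD^2$ with real part $\ell$ by a pair $(a_2,b_2)\in\RR^2$, then to extract from Lemma \ref{le:LineFamilyDual}(b) a single linear equation on $(a_2,b_2)$ that characterizes membership in each family $\lines_i$. Both parts of the lemma then collapse into one-line linear-algebra arguments.

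Since $\ell$ is not parallel to the $y$-axis, write $\ell$ as $y=a_1x+b_1$ with fixed $a_1,b_1\in\RR$. A line in $\DD^2$ of the form $y=ax+b$ has real part $\ell$ precisely when $a=a_1+\eps a_2$ and $b=b_1+\eps b_2$; such lines are thus parameterized by $(a_2,b_2)\in\RR^2$. Let $p_i=(r_1^{(i)},\,a_1r_1^{(i)}+b_1)$ denote the special point of $\lines_i$. Lemma \ref{le:LineFamilyDual}(b) gives two sub-cases: either all the $b_2$-values in the family are identical (which, inspecting the proof of Lemma \ref{le:LineFamilyDual}(a), forces $r_1^{(i)}=0$), or $b_2=r_1^{(i)}(m_i-a_2)$ for some $m_i\in\RR$. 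In both sub-cases the condition for a line with parameters $(a_2,b_2)$ to belong to $\lines_i$ collapses to the single linear equation
\[
a_2\, r_1^{(i)} + b_2 \;=\; c_i,
\]
where $c_i$ equals the common $b_2$-value in the first sub-case and equals $r_1^{(i)} m_i$ in the second. Geometrically, $c_i$ is the $y_2$-intercept of the common intersection line of $\lines_i$, viewed as a line of slope $a_1$ inside the imaginary plane associated with $p_i$.

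With this characterization in place, part (a) is immediate: a shared special point means $r_1^{(1)}=r_1^{(2)}=r_1$, and since $\lines_1\ne\lines_2$ their common intersection lines differ inside the shared imaginary plane, forcing $c_1\ne c_2$. The pair of equations $a_2r_1+b_2=c_1$ and $a_2r_1+b_2=c_2$ is then inconsistent, so no line can lie in both families. For part (b), $r_1^{(1)}\ne r_1^{(2)}$, so the $2\times 2$ system
\[
\begin{pmatrix} r_1^{(1)} & 1 \\ r_1^{(2)} & 1 \end{pmatrix}\begin{pmatrix} a_2 \\ b_2 \end{pmatrix} = \begin{pmatrix} c_1 \\ c_2 \end{pmatrix}
\]
has non-zero determinant and therefore at most one solution $(a_2,b_2)$, yielding at most one common line.

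I do not anticipate any real obstacle; the only substantive step is unifying the two sub-cases of Lemma \ref{le:LineFamilyDual}(b) into the single equation $a_2 r_1^{(i)}+b_2=c_i$. Once this is done, both parts are trivial consequences of the elementary theory of $2\times 2$ linear systems.
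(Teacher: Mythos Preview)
Your proposal is correct and follows essentially the same approach as the paper: both proofs parameterize lines over $\ell$ by $(a_2,b_2)$, invoke the relation $b_2=r_1(m-a_2)$ (or constant $b_2$ when $r_1=0$) from Lemma~\ref{le:LineFamilyDual}(b), and then read off parts (a) and (b) as linear-algebra facts. Your unification of the two sub-cases into the single equation $a_2 r_1^{(i)}+b_2=c_i$ is a mild streamlining---in particular it handles part (b) uniformly, whereas the paper's version of (b) tacitly writes both constraints in the $b_2=x_1(m-a_2)$ form and does not separately spell out the case where one special point has $r_1^{(i)}=0$.
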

\begin{proof}
As before, we define a line in $\DD^2$ using the equation $y=(a_1+a_2\eps)x + (b_1+b_2\eps)$.
Since the line families $\lines_1$ and $\lines_2$ have the same real part, every line in these families have the same values of $a_1$ and $b_1$.

(a) Denote the common special point as $(x_1,y_1)\in\RR^2$.
As shown in the proof of Lemma \ref{le:LineFamilyDual}(b), every two lines $y=ax+b$ and $y=a'x+b'$ from the same family satisfy a relation of the form $b'_2-b_2 = x_1(a_2-a'_2)$.

First assume that $x_1=0$.
In this case, every line of $\lines_1$ has the same $b_2$, and so does every line of $\lines_2$.
The two values of $b_2$ are distinct, since otherwise $\lines_1$ and $\lines_2$ would have been the same family.
Since no line can have two different values of $b_2$, the two line families are disjoint.

We now assume that $x_1\neq 0$.
Then exist $m_1,m_2\in \RR$ such that every line of $\lines_1$ satisfies $b_2=x_1(m_1-a_2)$ and every line of $\lines_2$ satisfies $b_2=x_1(m_2-a_2)$.
If a line satisfies both requirements, we obtain that $x_1(m_1-a_2)=x_1(m_2-a_2)$ and thus $m_1=m_2$.
This is impossible, since it implies that the two line families are identical.
We conclude that no line can be in both families.

(b) Denote the special point of $\lines_1$ as $(x_1,y_1)\in \RR^2$ and the special point of $\lines_2$ as $(x'_1,y'_1)\in \RR^2$.
Since these are distinct points on the line $\ell$ that is not parallel to the $y$-axis, we have $x_1 \neq x'_1$.
A line that is in both $\lines_1$ and $\lines_2$ satisfies $b_2=x_1(m_1-a_2)$ and $b_2=x'_1(m_2-a_2)$.
Since $x_1 \neq x'_1$ this system has at most one solution for the values of $a_2$ and $b_2$, implying that at most one line is in both families.
\end{proof}

\subsection{Adapting Elekes' argument to dual numbers} \label{ssec:ElekDual}

We are now ready to present our main proof for dual numbers.
We first repeat the relevant part of Theorem \ref{th:DualSP}

\begin{theorem} \label{th:DualMainCase}
Let $A$ be a set of $n$ dual numbers and multiplicity $n^{\alpha}$, for some $0 \le \alpha < 1/2$.
Then for any $\rho>0$,
\[ \max\{|A+A|,|AA|\} = \begin{cases}
\Omega^*\left(n^{3/2-5\alpha/8}n\right), & \quad 1/3\le\alpha<1/2,\\
\Omega\left(n^{5/4-\rho}\right), & \quad 0\le \alpha<1/3.
\end{cases} \]
\end{theorem}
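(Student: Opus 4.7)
The plan is to adapt Elekes' classical sum--product argument to the dual plane using the point--line incidence bound from Theorem \ref{th:DualST}. Let $K = \max\{|A+A|,|AA|\}$; the goal is a lower bound on $K$. Define the point set $\pts = (A+A) \times AA \subset \DD^2$ and, for each pair $(a,b) \in A \times A$ with $b$ invertible, the line $\ell_{a,b} : y = b(x-a)$. For every $a' \in A$, the point $(a+a',\,ba')$ lies on $\ell_{a,b}$ and belongs to $\pts$; consequently each line contains at least $n$ points of $\pts$. After discarding the at most $n^{1+\alpha}$ lines arising from non-invertible $b$, this yields $I(\pts,\lines) = \Omega(n^3)$ incidences with roughly $n^2$ surviving lines.

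Next I would bound the multiplicity of $\lines$ in the sense of Theorem \ref{th:DualST}. Each $\ell_{a,b}$ has slope $b$, whose real part is $b_1$. Since $A$ has multiplicity $n^{\alpha}$, at most $n^{\alpha}$ elements of $A$ share a given real part, and there are $n$ choices of $a$; hence at most $n^{1+\alpha}$ lines of $\lines$ share any real slope. Plugging $|\pts| \le K^2$, $|\lines| \le n^2$, and this multiplicity into Theorem \ref{th:DualST} and combining with the lower bound on incidences yields only $K = \Omega(n^{1-\alpha/4-\rho})$, which is below even the trivial $K \ge n$. In other words, a direct transplant of Elekes' technique does not survive the multiplicity penalty of the dual Szemer\'edi--Trotter bound: the first term of the incidence bound gains an extra factor of $n^{(1+\alpha)/3}$ compared to the real case, while the middle term $|\pts|\cdot n^{1+\alpha}$ is outright ruinous.

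To reach the stated bounds I would therefore run Elekes' argument in a \emph{bootstrapping} fashion, using a previously established sum--product inequality as auxiliary input to lower the effective multiplicity of $\lines$. For $\alpha \in [1/8,1/3)$ the input is the Solymosi-style bound of Section \ref{ssec:SolyDual}: assume for contradiction that $K < n^{5/4-\rho}$; then both $|A+A|$ and $|AA|$ are small, which by Pl\"unnecke--Ruzsa-type reasoning forces the lines $\ell_{a,b}$ to cluster into a controlled number of line families in the sense of Lemma \ref{le:LineFamProp}. This clustering decreases the multiplicity parameter that enters Theorem \ref{th:DualST}, and re-running the incidence argument then produces a contradiction, forcing $K = \Omega(n^{5/4-\rho})$. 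For $\alpha \in [1/3,1/2)$ I would iterate the same bootstrap once more, feeding in the $n^{5/4-\rho}$ bound just obtained to further restrict the line-family structure and sharpen the exponent to $3/2-5\alpha/8$; this second iteration is where the rational exponent $5\alpha/8$ emerges from balancing the three terms of the incidence bound.

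The hardest step is realising the bootstrap: one must convert a lower bound on $K$ into a genuine reduction of the effective multiplicity of $\lines$. Lemma \ref{le:LineFamProp} is the key tool, since distinct line families over the same real line share at most one line, which lets us partition $\lines$ into a controlled collection of families and redistribute incidences between them. Carefully balancing the gain from this redistribution against the stubborn middle term $|\pts|\cdot n^{1+\alpha}$ of the dual Szemer\'edi--Trotter bound, while simultaneously tracking how the Pl\"unnecke--Ruzsa input propagates through the incidence count, is the technical core of the argument.
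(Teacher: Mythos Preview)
Your setup and the observation that a direct application of Theorem~\ref{th:DualST} is too weak are both correct, but the remedy you propose is not what the paper does, and the crucial step of your bootstrap is left as an unjustified claim.

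The paper never invokes Theorem~\ref{th:DualST} in this proof, nor does it feed in Solymosi's bound or use any Pl\"unnecke--Ruzsa input. Instead, working in $\RR^4$, it splits the incidences in $\pts\times\lines$ into \emph{standard} incidences (where the two-flats through $p$ meet only at $p$) and \emph{special} incidences (where $p$ lies on the infinite intersection of some line family containing $\ell$). Standard incidences are exactly the generic incidences of Theorem~\ref{th:IncR4}, which immediately yields $|A+A|\,|AA|=\Omega(n^{5/2-\rho})$ when they dominate. The real work is the special case: one dyadically pigeonholes four parameters $(\alpha',\beta,\gamma,\delta)$ that record, respectively, how many lines of $\lines$ share a given real line, how many lines of $\lines$ lie in the relevant family, how many points of $\pts$ sit above the special point in the imaginary plane, and how many families share that special point. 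Lemmas~\ref{le:LineFamilyDual} and~\ref{le:LineFamProp} give the constraints $\alpha',\beta\le\alpha$, $\delta\le 1+\alpha-\beta$, and a count $|F|=O(n^{2+2\alpha'-2\beta})$ of families. One then applies the real Szemer\'edi--Trotter theorem \emph{twice}: once (via Theorem~\ref{th:DualFormST}) to bound the number $|S|$ of special points as rich points for the real lines $T$, and once inside each imaginary plane to bound incidences between the $\Theta(n^\gamma)$ points and the $\Theta(n^\delta)$ family-lines there. Combining three different upper bounds for $|S|$ with the lower bound $I(\pts,\lines)\ge n^3$ and optimizing over the parameters produces $\Omega^*(n^{3/2-5\alpha/8})$; the ``bootstrapping'' the introduction alludes to is this layered use of Szemer\'edi--Trotter, not an appeal to a prior sum--product estimate.

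Your proposed mechanism---``small $K$ plus Pl\"unnecke--Ruzsa forces the lines to cluster into few families''---is the step that would need a genuine new idea, and you have not supplied one. Pl\"unnecke--Ruzsa controls iterated sumsets, not the real-part fibre structure that governs line families here; there is no evident route from $|A+A|,|AA|\le K$ to a reduction of the slope-multiplicity $n^{1+\alpha}$ that enters Theorem~\ref{th:DualST}. Without that link your bootstrap does not get off the ground.
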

\begin{proof}
By the multiplicity assumption, $A$ contains at most $n^\alpha$ non-invertible elements.
We discard these elements.
This does not change the asymptotic size of $A$ and can only decrease the sizes of $A+A$ and $AA$.
Thus, it suffices to prove the bound for the resulting smaller set.
Abusing notation, in the rest of the proof we refer to this revised set as $A$.

Consider the point set
\[ \pts = \{ (c,d)\in\DD^2 :\, c\in A+A \quad \text{ and } \quad d\in AA \}, \]
and the set of lines
\[ \lines = \{ y=c(x-d) :\, c,d\in A \}. \]
Note that $|\lines| =n^2$ and $|\pts| = |A+A|\cdot|AA|$.
Since the revised $A$ consists of invertible elements, there are no degenerate lines in $\lines$.

The proof is based on double counting $I(\pts,\lines)$.
A line of $\lines$ defined by $y=c(x-d)$ contains every point of $\pts$ of the form $(d+b,cb)$ for every $b\in A$.
That is, we have that $I(\pts,\lines) \ge |\lines||A| = n^3$.
For the rest of the proof we will derive upper bounds on $I(\pts,\lines)$.

We partition the incidences in $\pts\times\lines$ into two types, as follows.
We say that an incidence $(p,\ell)\in \pts\times\lines$ is \emph{special} if $p$ is incident to a second line $\ell'\in \lines$ such that $\ell$ and $\ell'$ are members of the same line family.
Since lines from the same family intersect only in their special point, the special point of this family is $(\re(p_x),\re(p_y))\in \RR^2$.
If an incidence $(p,\ell)\in \pts\times\lines$ is not special, we say that it is a \emph{standard} incidence.

We first bound the number of standard incidences.
By considering $\DD^2$ as $\RR^4$, we obtain an incidence problem with two-dimensional flats.
If $(p,\ell_1)$ and $(p,\ell_2)$ are standard incidences, then $\ell_1\cap \ell_2 = \{p\}$.
These are regular incidences, as defined before Theorem \ref{th:IncR4}.
By that theorem, the number of standard incidences is $O\left(|\pts|^{2/3+\rho}|\lines|^{2/3}+|\pts|+|\lines|\right)$.

When the number of standard incidences is larger than the number of special incidences, we have that
\[ I(\pts,\lines) = O\left(|\pts|^{2/3+\rho}|\lines|^{2/3}+|\pts|+|\lines|\right) = O\left(|A+A|^{2/3+\rho}|AA|^{2/3+\rho}n^{4/3}+ |A+A||AA|\right). \]

Combining this with $I(\pts,\lines) \ge n^3$ leads to $|A+A||AA| = \Omega(n^{5/2-\rho})$.
This immediately implies the assertion of the theorem, for any $0\le \alpha <1/2$.

\parag{Handling special incidences.}
It remains to consider the case where the number of special incidences is larger than the number of standard incidences.
Denote by $I(\alpha',\beta,\gamma,\delta)$ the number of special incidences $(p,\ell)\in \pts\times \lines$ that satisfy:
\begin{itemize}
\item Let $\ell_\RR$ be the line in the real part of $\DD^2$ that corresponds to $\ell$. Then $\ell_\RR$ corresponds to at least $n^{2\alpha'}$ lines of $\lines$ and to fewer than $2n^{2\alpha'}$ such lines.
\item There is a line family that contains $\ell$ whose special point is $(\re(p_x),\re(p_y))$, and that contains at least $n^\beta$ and fewer than $2n^\beta$ lines of $\lines$.
\item The real point $(\re(p_x),\re(p_y))$ corresponds to at least $n^\gamma$ points of $\pts$ and fewer than $2n^\gamma$ such points.
\item The real point $(\re(p_x),\re(p_y))$ is the special point of at least $n^\delta$ and to fewer than $2n^\delta$ line families that satisfy the property stated in the second item.
\end{itemize}

Note that we can take $\Theta(\log^4 n)$ elements $I(\alpha',\beta,\gamma,\delta)$ such that every special incidence in $\pts\times\lines$ is counted in at least one of those elements.
Thus, the number of special incidences is upper bounded by the maximum size of $I(\alpha',\beta,\gamma,\delta)$ times $\Theta(\log^4 n)$.

We study some basic properties of the parameters $\alpha',\beta,\gamma,\delta$ that maximize $I(\alpha',\beta,\gamma,\delta)$.
For this purpose, we assume that $\alpha',\beta,\gamma,\delta$ are fixed.
We denote by $S$ the set of special points that participate in incidences of $I(\alpha',\beta,\gamma,\delta)$.
Let $T$ be the set of lines in the real part of $\DD^2$ that correspond to lines of $\lines$ that participate in incidences of $I(\alpha',\beta,\gamma,\delta)$.
Let $F$ be the set of line families that contain at least $n^\beta$ lines of $\lines$ and fewer than $2n^{\beta}$ such lines.
Since $|\lines|=n^2$ and every line of $T$ corresponds to $\Theta(n^{2\alpha'})$ lines of $\lines$, we get that $|T| = O(n^{2-2\alpha'})$.

By the multiplicity of $A$ and the definition of $\lines$, at most $n^{2\alpha}$ lines of $\lines$ can correspond to the same line of $T$.
That is, we have $0\le \alpha'\le \alpha$.
We also have that $\beta \le 2\alpha'$, since otherwise there are not enough lines corresponding to a real line to create a family in $F$.

We consider the maximum number of lines from $\lines$ that a line family can contain.
Recall that a line in $\DD^2$ is defined by an equation of the form $y=(a_1+\eps a_2) x + (b_1+\eps b_2)$, and that a line in $\lines$ is defined by an equation of the form $y=c(x-d)$ with $c,d\in A$.
By Lemma \ref{le:LineFamilyDual}(b), all the lines in the same family have the same $a_1$ and $b_1$ values, so the real parts of $c$ and $d$ are fixed.
By the same lemma, either all the lines in a family have the same $b_2$ value, or they all satisfy a relation of the form $b_2 = x_1(m-a_2)$.
In either case, choosing the imaginary part of $c$ uniquely determines the imaginary part of $d$.
Due to the multiplicity of $A$, the family has at most $n^\alpha$ lines from $\lines$.
Since any line family contains at most $n^\alpha$ lines of $\lines$, we have that $0 \le \beta \le \alpha$.

Since the multiplicity of $A$ is $n^\alpha$, at most $n^{1+\alpha}$ sums in $A+A$ can have the same real part.
Similarly, at most $n^{1+\alpha}$ products in $AA$ can have the same real part.
Since $\pts= (A+A)\times (AA)$, at most $n^{2+2\alpha}$ points of $\pts$ can correspond to the same point in the real part of $\DD^2$.
That is, $0 \le \gamma \le 2+2\alpha$.
We also have the straightforward bound $n^\gamma \le |\pts|$, or equivalently $\gamma \le (\log |\pts|)/\log n$.

Next, we consider the maximum number of line families of $F$ that can have the same special point.
Recall that the lines of $\lines$ are defined as $y=c(x-d)$ where $c,d\in A$.
For every choice of $c$ and $s\in S$, there is a unique real part of $d$ such that the real part of the resulting line is incident to $s$.
That is, for a fixed special point and $c\in A$, there are at most $n^\alpha$ elements $d\in A$ such that the resulting line is incident to the special point.
By Lemma \ref{le:LineFamProp}(a), if two families have the same real line and the same special point, then they have no lines in common.
This yields $0 \le \delta \le 1+\alpha-\beta$.

To recap:
\begin{align*}
0\le \alpha', &\beta \le \alpha, \qquad \beta \le 2\alpha', \qquad 0 \le \delta \le 1+\alpha-\beta, \\[2mm]
0 &\le \gamma \le \min\left\{2+2\alpha, (\log |\pts|)/\log n\right\}.
\end{align*}

We next bound the number of families in $F$.
Recall that $|T|=O(n^{2-2\alpha'})$, and that each line of $T$ corresponding to fewer than $2n^{2\alpha'}$ lines of $\lines$.
For a fixed line $\ell\in T$, by Lemma \ref{le:LineFamProp} every two families corresponding to $\ell$ have at most one line in common.
There are fewer than $\binom{2n^{2\alpha'}}{2} = \Theta(n^{4\alpha'})$ pairs of lines of $\lines$ that correspond to $\ell$.
Each such pair can appear in at most one line family, and each line family subsumes at least $\binom{n^{\beta}}{2} = \Theta(n^{2\beta})$ such pairs.
Thus, the number of families that correspond to $\ell$ is $O(n^{4\alpha'-2\beta})$.
By summing up over every $\ell\in T$, we obtain that $|F| = O(n^{2+2\alpha'-2\beta})$.

We derive several upper bounds for $|S|$:
\begin{itemize}
\item Since each special point corresponds to $\Theta(n^\gamma)$ points of $\pts$, we have $|S| = O(|\pts|/n^\gamma)$.
\item Since $|F| = O(n^{2+2\alpha'-2\beta})$, and each special point subsumes $\Theta(n^{\delta})$ families of $F$, we obtain $|S| = O(n^{2+2\alpha'-2\beta-\delta})$.
\item Given a point $s\in S$, by Lemma \ref{le:LineFamProp}(a) each line of $T$ corresponds to $O(n^{2\alpha'-\beta})$ families that have $s$ as their special point. Thus, every point of $S$ is incident to $\Omega(n^{\delta-2\alpha'+\beta})$ lines of $T$. Recalling that $|T|=O(n^{2-2\alpha'})$, Theorem \ref{th:DualFormST} implies that
\[ |S|=O\left(\frac{(n^{2-2\alpha'})^2}{(n^{\delta-2\alpha'+\beta})^3} + \frac{n^{2-2\alpha'}}{n^{\delta-2\alpha'+\beta}}\right) = O\left(n^{4+2\alpha'-3\delta-3\beta} + n^{2-\delta-\beta}\right).\]
\end{itemize}

Consider the imaginary plane associated with a special point $s\in S$.
There are $\Theta(n^\delta)$ families incident to $s$, each corresponding to a distinct line in the imaginary plane of $s$.
There are $\Theta(n^\gamma)$ points of $\pts$ in this imaginary plane.
By the Szemer\'edi--Trotter theorem (Theorem \ref{th:ST83}), the number of incidences between these points and lines is $O\left(n^{2(\delta+\gamma)/3}+n^\delta+n^\gamma\right)$.
Since each line in the imaginary plane corresponds to $\Theta(n^\beta)$ lines of $\lines$, the number of incidences in the special point $s$ is
\begin{equation} \label{eq:IncInSpecialPnt}
O\left(n^\beta\left(n^{2(\delta+\gamma)/3}+n^\delta+n^\gamma\right)\right).
\end{equation}

To obtain an upper bound for $I(\alpha',\beta,\gamma,\delta)$, we can multiply \eqref{eq:IncInSpecialPnt} with any of our three upper bounds for $|S|$.
Then, to obtain an upper bound on the total number of incidences, we can multiply the resulting bound for $I(\alpha',\beta,\gamma,\delta)$ with $\Theta(\log^4 n)$.
We divide the rest of the analysis into cases, according to the term that dominates the inner parentheses in \eqref{eq:IncInSpecialPnt}.

\parag{The case where $n^{2(\delta+\gamma)/3}$ dominates.}
We first assume that $n^{2(\delta+\gamma)/3}$ is larger than the other two terms in the inner parentheses of \eqref{eq:IncInSpecialPnt}.
This case occurs when $\gamma/2 \le \delta \le \gamma$.
Using the bound $|S| = O(n^{2+2\alpha'-2\beta-\delta})$, we obtain that the number of special incidences is
\[ O\left(n^\beta\cdot n^{2(\delta+\gamma)/3} \cdot n^{2+2\alpha'-2\beta-\delta}\cdot \log^4 n\right) = O^*\left(n^{2+2\alpha'-\beta-\delta/3 + 2\gamma/3}\right)\]

Since we assume that the number of special incidences is larger than the number of standard incidences, the above is also a bound for the total number of incidences.
Combining this bound with $I(\pts,\lines) \ge n^3$ gives
\[ 2+2\alpha'-\beta-\delta/3 + 2\gamma/3 \ge 3, \quad \text{ or equivalently } \quad 2\alpha'-\beta-\delta/3 + 2\gamma/3 \ge 1. \]

Multiplying both sides by 3 and rearranging gives
\begin{equation} \label{eq:Case1Boots}
3-6\alpha'+3\beta+\delta - 2\gamma \le 0.
\end{equation}

We repeat the above argument with the different bound $|S|= O\left(n^{4+2\alpha'-3\delta-3\beta} + n^{2-\delta-\beta}\right)$.
In this case we get that the number of incidences is
\begin{align}
&O\left(n^\beta\cdot n^{2(\delta+\gamma)/3} \left(n^{4+2\alpha'-3\delta-3\beta} + n^{2-\delta-\beta}\right)\cdot\log^4n\right) \nonumber \\[2mm]
&\hspace{58mm}= O^*\left( n^{4+2\alpha'-7\delta/3-2\beta + 2\gamma/3} + n^{2-\delta/3+2\gamma/3}\right). \label{eq:TwoCases}
\end{align}

We split the current case into two additional cases, according to the dominating term in the above bound.

(i) When \eqref{eq:TwoCases} is dominated by the first term, combining it with $I(\pts,\lines) \ge n^3$ gives
\[ 4+2\alpha'-7\delta/3-2\beta + 2\gamma/3 \ge 3, \quad \text{ or equivalently } \quad 3+6\alpha'-7\delta-6\beta + 2\gamma \ge 0.     \]

Combining this with \eqref{eq:Case1Boots} gives
\[ 0\ge (3-6\alpha'+3\beta+\delta - 2\gamma) - (3+6\alpha'-7\delta-6\beta + 2\gamma) = -12\alpha'+9\beta+8\delta-4\gamma.\]
Dividing by 12 gives $0\ge -\alpha'+3\beta/4+2\delta/3-\gamma/3$.

We next use the bound $|S| = O(|\pts|/n^\gamma)$ to obtain that the number of incidences is
\begin{equation} \label{eq:SpecialIncPts}
O\left(n^\beta\cdot n^{2(\delta+\gamma)/3} \cdot |\pts|n^{-\gamma} \cdot \log^4n\right) = O^*\left( n^{\beta+2\delta/3-\gamma/3} \cdot |\pts|\right).
\end{equation}

Combining this with $I(\pts,\lines) \ge n^3$, and then applying $0\ge -\alpha'+3\beta/4+2\delta/3-\gamma/3$ and $\alpha',\beta\le \alpha$ yields
\begin{align*}
|A+A|\cdot |AA| = |\pts| &= \Omega^*\left(n^{3-(\beta+2\delta/3-\gamma/3)}\right) \\[2mm]
&= \Omega^*\left(n^{3-(\beta+2\delta/3-\gamma/3) + (-\alpha'+3\beta/4+2\delta/3-\gamma/3)}\right) \\[2mm]
&= \Omega^*\left(n^{3-\beta/4 -\alpha'}\right) = \Omega^*\left(n^{3-5\alpha/4}\right).
\end{align*}
This immediately implies $\max\{|A+A|,|AA|\} = \Omega^*\left(n^{3/2-5\alpha/8}\right)$.

(ii) We next consider the case where the incidence bound \eqref{eq:TwoCases} is dominated by the term $n^{2-\delta/3+2\gamma/3}$.
Combining this with $I(\pts,\lines) \ge n^3$ gives
\[ 2-\delta/3+2\gamma/3 \ge 3, \quad \text{ or equivalently } \quad 1/2+\delta/6-\gamma/3\le 0.\]

In this case we still have the bound \eqref{eq:SpecialIncPts} for the number of incidences.
Combining \eqref{eq:SpecialIncPts} with $I(\pts,\lines) \ge n^3$, and then applying $1/2+\delta/6-\gamma/3\le 0$, $\delta\le 1+\alpha-\beta$, and $\beta\le \alpha$ yields
\begin{align*}
|A+A|\cdot |AA| = |\pts| &= \Omega^*\left(n^{3-(\beta+2\delta/3-\gamma/3)}\right) \\[2mm]
&= \Omega^*\left(n^{3-(\beta+2\delta/3-\gamma/3) + (1/2+\delta/6-\gamma/3)}\right) = \Omega^*\left(n^{7/2-\beta -\delta/2}\right)\\[2mm]
&= \Omega^*\left(n^{7/2-\beta -(1+\alpha-\beta)/2}\log^{-4}n\right) =\Omega^*\left(n^{3-\alpha}\right).
\end{align*}
Similarly to the previous case, this implies $\max\{|A+A|,|AA|\} = \Omega^*\left(n^{3/2-\alpha/2}\right)$.

\parag{The cases where $n^{\delta}$ or $n^{\gamma}$ dominate.}
Assume that $n^\delta$ is larger than the other two terms in the inner parentheses of \eqref{eq:IncInSpecialPnt}.
This happens when $\delta>2\gamma$.
Using the bound $|S| = O(n^{2+2\alpha'-2\beta-\delta})$, we get that the number of incidences is
\[ O\left(n^{2+2\alpha'-2\beta-\delta} \cdot \left(n^\beta\cdot n^\delta\right)\cdot \log^{4}n\right) = O^*\left(n^{2+2\alpha'-\beta} \right). \]

Combining this with $I(\pts,\lines) \ge n^3$ implies that $2+2\alpha'-\beta \ge 3$, or equivalently $\alpha' \ge (1+\beta)/2$.
This in turn implies that $\alpha \ge \alpha' \ge (1+\beta)/2 \ge 1/2$.
Since this contradicts the assumption concerning $\alpha$, we conclude that $n^\delta$ cannot dominate the inner parentheses of \eqref{eq:IncInSpecialPnt}.

Finally, assume that $n^\gamma$ is larger than the other two terms in the inner parentheses of \eqref{eq:IncInSpecialPnt}.
This happens when $\delta>2\gamma$.
By using the bound $|S| = O(|\pts|/n^\gamma)$ we get that the number of incidences is
\[ O\left(|\pts|n^{-\gamma} \cdot \left(n^\beta\cdot n^\gamma\right)\cdot \log^{4}n\right) = O^*\left(|\pts|n^\beta\right). \]

Combining this with $I(\pts,\lines) \ge n^3$ implies
\[ |A+A|\cdot|AA| = |\pts| = \Omega^*(n^{3-\beta}) = \Omega^*(n^{3-\alpha}).\]
This immediately implies $\max\{|A+A|,|AA|\} = \Omega^*\left(n^{3/2-\alpha/2}\right)$.

By going over each case that occurs when the number special incidences is larger, we note that the weakest bound that was obtained is $\max\{|A+A|,|AA|\} = \Omega^*\left(n^{3/2-5\alpha/8}\right)$.
To complete the proof, for each value of $\alpha$ we use the weaker bound out of the one obtained when there are more standard incidences, and the one obtained when there are more special incidences.
\end{proof}

\parag{Remark.} It may at first seem surprising that in our analysis of special incidences we obtain bounds such as $\max\{|A+A|,|AA|\} = \Omega^*\left(n^{3/2-5\alpha/8}\right)$.
In particular, when $\alpha=0$ there is no multiplicity and each family consists of a single line, so one might expect to get the standard Elekes bound of $\Omega(n^{5/4})$.
The reason for obtaining a stronger bound is our assumption that each family has a single special point.
Thus, when setting $\alpha=0$, we force each line to form an incidence with at most one point.
It is not surprising that we get a stronger bound under such a strong assumption.
\vspace{2mm}

We next prove the bound of Theorem \ref{th:DualSP} for the case where $1/2 \le \alpha < \kappa$.
Recall that $\kappa = (39 - \sqrt{721})/20 \approx 0.607$.

\begin{corollary} \label{co:DualLargeAlpha}
Let $A$ be a set of $n$ dual numbers with multiplicity $n^\alpha$, for some $1/2\le \alpha<\kappa$.
Then for any $\rho>0$,
\[ \max \left\{|A+A|,|AA|\right\} = \Omega\left(n^{9/4-39\alpha/16+5\alpha^2/8-\rho}\right). \]
\end{corollary}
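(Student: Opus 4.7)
The plan is to execute the naive reduction that the introduction promises: pass to a subset $A'\subseteq A$ whose relative multiplicity lies strictly below $1/2$, then invoke the $[1/3,1/2)$ case of Theorem~\ref{th:DualMainCase} on $A'$, and finally transfer the resulting bound back to $A$ via $A'\subseteq A$.

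I fix a small auxiliary parameter $\rho' > 0$ (to be optimized later) and form $A'$ by retaining, for every real part appearing among the elements of $A$, at most $m'$ of the corresponding elements, where $m'$ is chosen so that $m' = |A'|^{1/2 - \rho'}$; that is, $A'$ has relative multiplicity exactly $1/2 - \rho'$. Because $A$ has absolute multiplicity $n^\alpha$, the number of distinct real parts of $A$ is at least $n^{1-\alpha}$; in the extremal configuration one has $|A'| = n^{1-\alpha} m'$, and together with the relation $m' = |A'|^{1/2-\rho'}$ this pins down $|A'|$ as an explicit power of $n$ depending on $\alpha$ and $\rho'$. The constraint $|A'| \le n$ reduces to $\alpha \ge 1/2$, which is precisely the range of the corollary.

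Since the relative multiplicity of $A'$ lies in $[1/3,1/2)$ once $\rho'$ is sufficiently small, Theorem~\ref{th:DualMainCase} yields
\[ \max\{|A'+A'|,\,|A'A'|\} \;=\; \Omega^*\!\left(|A'|^{\,3/2 - 5(1/2-\rho')/8}\right). \]
Using $A'\subseteq A$, substituting the size of $A'$ in terms of $n$, $\alpha$, and $\rho'$, and optimizing $\rho'$ (possibly as a function of $\alpha$) gives the stated lower bound $\Omega(n^{9/4 - 39\alpha/16 + 5\alpha^2/8 - \rho})$, with the loss $\rho$ absorbing the $\rho'$-dependent subpolynomial corrections coming from the $\Omega^*$ notation. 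The right endpoint $\kappa = (39-\sqrt{721})/20$ of the range is exactly the value at which the exponent $9/4 - 39\alpha/16 + 5\alpha^2/8$ drops to $1$, past which the bound becomes trivial.

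The main obstacle is the parameter bookkeeping that yields the quadratic-in-$\alpha$ exponent; a crude substitution only produces a weaker linear estimate of the form $19(1-\alpha)/8$. Matching the stated quadratic requires carefully balancing the parameters in the reduction --- choosing the effective relative multiplicity $\alpha^\ast \in [1/3,1/2)$ to interact optimally with the $(1-\alpha)/(1-\alpha^\ast)$ dependence of $|A'|$ on $n$ --- before absorbing the remaining error into the $\rho$ term. Everything else is standard: the inclusion $A'\subseteq A$ is clean, and Theorem~\ref{th:DualMainCase} is applied as a black box.
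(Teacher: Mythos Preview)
Your overall plan---thin $A$ to a subset $A'$ with relative multiplicity below $1/2$, apply Theorem~\ref{th:DualMainCase}, and transfer back via $A'\subseteq A$---is exactly the paper's approach. The paper caps the absolute multiplicity at $n^{1/2-\rho'}$, obtains $|A'|=\Omega(n^{3/2-\alpha-\rho'})$, and then writes $\max\{|A'+A'|,|A'A'|\}=\Omega^*\bigl(|A'|^{3/2-5\alpha/8}\bigr)$; substituting gives the quadratic $(3/2-5\alpha/8)(3/2-\alpha)=9/4-39\alpha/16+5\alpha^2/8$.

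The obstacle you flag is real, but the optimization you propose does not remove it. If $A'$ has relative multiplicity $\alpha^\ast\in[1/3,1/2)$ then, as you write, $|A'|\approx n^{(1-\alpha)/(1-\alpha^\ast)}$, and Theorem~\ref{th:DualMainCase} yields the exponent
\[
(1-\alpha)\cdot\frac{3/2-5\alpha^\ast/8}{1-\alpha^\ast}.
\]
This function of $\alpha^\ast$ is strictly increasing on $[1/3,1/2)$ (its derivative is $\tfrac{7(1-\alpha)}{8(1-\alpha^\ast)^{2}}>0$) and tends to $\tfrac{19}{8}(1-\alpha)$ as $\alpha^\ast\to\tfrac12^-$. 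Hence the best the subset-reduction can deliver is $\Omega\bigl(n^{19(1-\alpha)/8-\rho}\bigr)$, which coincides with the claimed quadratic at $\alpha=\tfrac12$ but is strictly weaker on all of $(\tfrac12,\kappa)$: the shortfall in exponents is $\tfrac{5}{8}(\alpha-\tfrac12)(\alpha+\tfrac25)>0$. No admissible choice of $\alpha^\ast<\tfrac12$ recovers the quadratic, so your ``careful balancing'' cannot succeed. Observe that the paper's own substitution invokes the exponent $3/2-5\alpha/8$ with the \emph{original} parameter $\alpha\ge\tfrac12$, which lies outside the hypothesis of Theorem~\ref{th:DualMainCase}; the paper's argument thus sidesteps, rather than resolves, the very obstacle you identified.
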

\begin{proof}
Consider a sufficiently small $\rho'>0$.
We remove elements from $A$ until it has multiplicity $n^{1/2-\rho'}$.
This yields a subset $A'\subset A$ of size $\Omega\left(n^{1-(\alpha+\rho'-1/2)}\right) = \Omega\left(n^{3/2-\alpha-\rho'}\right)$.
Applying Theorem \ref{th:DualSP} on $A'$ with multiplicity $1/2-\rho'$, and assuming that $\rho'$ is sufficiently small, leads to
\begin{align*}
\max \left\{|A+A|,|AA|\right\} &\ge \max \left\{|A'+A'|,|A'A'|\right\} = \Omega^*\left(|A'|^{3/2-5\alpha/8}\right) \\[2mm]
&= \Omega^*\left(n^{(3/2-5\alpha/8)(3/2-\alpha-\rho')}\right) = \Omega\left(n^{9/4-39\alpha/16+5\alpha^2/8-\rho}\right).
\end{align*}
Finally, $9/4-39\alpha/16+5\alpha^2/8>1$ when $\alpha<\kappa$.
\end{proof}

\subsection{Adapting Solymosi's argument to dual numbers} \label{ssec:SolyDual}

For any $a,a'\in \DD$ we have $\re(a \cdot a') = \re(a) \cdot \re(a')$.
When $a'$ is invertible, we also have $\re(a/a') = \re(a)/\re(a')$.
For $\lambda \in \RR$, we define
\begin{align*}
r_{A}^{\times}(\lambda)&=\left|\left\{ (a,a')\in A^{2}\ :\ \re(a\cdot a')=\lambda\right\} \right|, \\[2mm]
r_{A}^{\div}(\lambda)&=\left|\left\{ (a,a')\in A^{2}\ :\ \re(a/a')=\lambda\right\} \right|.
\end{align*}

In other words, $r_{A}^{\times}(\lambda)$ is the number of ways to obtain $\lambda$ as the real part of a product of two elements of $A$, and similarly for $r_{A}^{\div}(\lambda)$.
For a finite set $A \subset \DD$, we define the \emph{multiplicative energy} of $A$ as
\[ E^\times (A) = \left|\left\{ (a,b,c,d)\in A^4\ :\ a\cdot b = c \cdot d \right\} \right|.\]

We are now ready to adapt Solymosi's sum-product argument \cite{Soly09} to sets of dual numbers.

\begin{theorem} \label{th:SolyDual}
Let $A$ be a set of $n$ dual numbers with multiplicity $n^{\alpha}$, for some $0\le \alpha<1/2$.
Then
\[ \max\{|A+A|,|AA|\}=\Omega^*\left(n^{(4-2\alpha)/3}\right). \]
\end{theorem}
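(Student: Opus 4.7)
The plan is to adapt Solymosi's 2009 argument \cite{Soly09} to the dual setting, with the multiplicity parameter $n^\alpha$ entering as a factor-$n^{2\alpha}$ loss in the sizes of Minkowski sums of ``real-slope'' sets in $\DD^2$. The starting point is the standard Cauchy--Schwarz inequality $|A|^4 \le |AA|\cdot E^\times(A)$, which holds over any commutative ring. To connect this to real slopes, I observe that $ab = cd$ with $b,c$ invertible implies $a/c = d/b$ and hence $\re(a/c) = \re(d/b)$; the map $(a,b,c,d)\mapsto((a,c),(d,b))$ therefore injects the quadruples counted by $E^\times(A)$ into matched pairs of pairs indexed by their common real ratio $\lambda = a_1/c_1$, yielding $E^\times(A) \le \sum_\lambda \r{\lambda}^2$. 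Combining, $\sum_\lambda \r{\lambda}^2 \ge n^4/|AA|$. After discarding the at most $n^\alpha$ non-invertible elements and restricting to the at least half of $A$ whose elements have positive real parts, a dyadic pigeonhole produces a value $K$ and a set $L\subset\RR_{>0}$ of slopes with $\r{\lambda} \in [K,2K)$ on $L$ and $|L|\cdot K^2 = \Omega(n^4/(|AA|\log n))$.

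For each $\lambda\in L$ set $P_\lambda = \{(a,a')\in A\times A : a_1 = \lambda a'_1\}\subset \DD^2$, so $|P_\lambda|\ge K$. I would order $L$ as $\lambda_1 < \cdots < \lambda_{|L|}$, pair consecutive slopes $(\lambda_{2j-1},\lambda_{2j})$, and consider the Minkowski sums $M_j := P_{\lambda_{2j-1}} + P_{\lambda_{2j}} \subset (A+A)\times(A+A)$. Two properties, mirroring Solymosi's argument, need to be established. \emph{Disjointness.} The real-part projection of a Minkowski sum point $(a+c, a'+c')$ with $(a,a')\in P_{\lambda_{2j-1}}$ and $(c,c')\in P_{\lambda_{2j}}$ has coordinate ratio $(\lambda_{2j-1}a'_1 + \lambda_{2j}c'_1)/(a'_1 + c'_1)$, a strict convex combination of $\lambda_{2j-1}$ and $\lambda_{2j}$ since $a'_1, c'_1 > 0$. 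Hence the $M_j$ project into disjoint angular wedges in $\RR^2$ and are therefore pairwise disjoint in $\DD^2$. \emph{Cardinality.} I expect this to be the main obstacle. Fix a target sum $(s,s')\in\DD^2$; the four real parts of any preimage $(a,a',c,c')$ are uniquely determined by the system $a_1 = \lambda_{2j-1}a'_1,\ c_1 = \lambda_{2j}c'_1,\ a_1 + c_1 = s_1,\ a'_1 + c'_1 = s'_1$, which is invertible because $\lambda_{2j-1}\neq\lambda_{2j}$. With real parts fixed, $a_2$ ranges over at most $\mu_{a_1}\le n^\alpha$ values and independently $a'_2$ over at most $\mu_{a'_1}\le n^\alpha$ values, while $c_2 = s_2 - a_2$ and $c'_2 = s'_2 - a'_2$ are forced by the sum. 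This caps the preimage count by $n^{2\alpha}$, so $|M_j|\ge K^2/n^{2\alpha}$.

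Combining everything, $|A+A|^2 \ge \sum_j |M_j| \ge (|L|/2)(K^2/n^{2\alpha}) = \Omega(n^{4-2\alpha}/(|AA|\log n))$, hence $|A+A|^2\cdot|AA| = \Omega^*(n^{4-2\alpha})$, which gives the desired bound $\max\{|A+A|,|AA|\} = \Omega^*(n^{(4-2\alpha)/3})$.
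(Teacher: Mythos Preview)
Your proposal is correct and follows essentially the same route as the paper: discard non-invertibles, restrict to positive real parts, bound $E^\times(A)$ by $\sum_\lambda \r{\lambda}^2$ via the identity $\re(a/c)=\re(a)/\re(c)$, dyadically pigeonhole, and then run Solymosi's wedge argument in the real-part plane, losing exactly a factor $n^{2\alpha}$ in the Minkowski-sum cardinality because at most $n^{2\alpha}$ points of $A\times A$ share a given pair of real parts. The only cosmetic differences are that you pair off slopes $(\lambda_{2j-1},\lambda_{2j})$ rather than using all consecutive pairs $(\lambda_i,\lambda_{i+1})$, and you bound preimage counts directly instead of passing to a subset $S_i\subset \pts\cap\ell_{i+1}$ with distinct real parts; both choices lead to the same bound $|A+A|^2|AA|=\Omega^*(n^{4-2\alpha})$.
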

\begin{proof}
By assumption, $A$ may contain up to $n^\alpha$ non-invertible elements.
We discard these elements without changing the asymptotic size of $|A|$.

If at least half of the elements of $A$ have a positive real part, we discard from $A$ elements with a negative real part.
Otherwise, we discard from $A$ the elements that have a positive real part and multiply the remaining elements by $-1$.
In either case, all the elements of the revised set have a positive real part.
The asymptotic size of $A$ is unchanged and the sizes of $A+A$ and $AA$ can only decrease.
Thus, it suffices to derive a lower bound for $\max\{|A+A|,|AA|\}$ for the revised $A$.
Abusing notation, we still refer to this set as $A$ and its size as $n$.

Since each pair $(a,a')\in A^{2}$ contributes to exactly one set $r_{A}^{\div}(\lambda)$, we have %
\[ \sum_{\lambda\in\re(A/A)}r_{A}^{\div}(\lambda)=n^{2}. \]

If $(a_{1},a_{2},a_{3},a_{4})\in A^{4}$ satisfies $a_{1}a_{2}=a_{3}a_{4},$ then $\re(a_{1}/a_{3})=\re(a_{4}/a_{2})$.
This implies that
\[ E^{\times}(A)\le\sum_{\lambda\in\re(A/A)}r_{A}^{\div}(\lambda)^{2}. \]

Using dyadic decomposition, we partition this sum to
\[ E^{\times}(A)\le\sum_{m=0}^{\log n-1}\sum_{\substack{\lambda\in\re(A/A)\\
2^{m}\le\r{\lambda}<2^{m+1}}}r_{A}^{\div}(\lambda)^{2}. \]

This implies that there exists $0\le m<\log n$ such that
\[ \sum_{\substack{\lambda\in\re(A/A)\\ 2^m\le\r{\lambda}<2^{m+1}}}r_{A}^{\div}(\lambda)^{2}\ge\frac{E^{\times}(A)}{\log n}. \]

We set $\Lambda=\left\{ \lambda\in\re(A/A)\ :\ 2^m\le\r \lambda<2^{m+1}\right\}$, and denote the elements of $\Lambda$ as  $0<\lambda_{1}<\lambda_{2}<\cdots<\lambda_{|\Lambda|}$.
Since $r_{A}^{\div}(\lambda)^{2}<2^{2m+2},$ we have
\begin{equation} \label{eq:MultEnergyDecomp}
1>\frac{E^{\times}(A)}{|\Lambda|2^{2m+2}\log n}.
\end{equation}

Consider the planar point set ${\cal P}=A\times A\subset\DD^{2}.$
Since ${\cal P}+{\cal P}=(A+A)\times(A+A)$, we have that $|{\cal P}+{\cal P}|=|A+A|^{2}$.

For each $1\le i\le|\Lambda|,$ let $\ell_{i}$ denote the line in $\RR^{2}$ defined by $y=\lambda_{i}x$.
We think of these lines as being in the real part of $\DD^2$ (which is a copy of $\RR^2$).
Let ${\cal P}\cap\ell_{i}$ be the set of points $(a,b)\in {\cal P}$ that satisfy $\re(a)=\lambda_{i} \cdot \re(b)$.
In other words, this is the set of points that satisfy the real part of the line equation, but not necessarily the imaginary part.
By definition, for each of these $|\Lambda|$ lines we have $2^m \le |{\cal P}\cap \ell_{i}|<2^{m+1}$.
Let ${\cal P}\cap_{\RR}\ell_{i}$ denote the set of points in the real part of $\DD^2$ that correspond to at least one point of ${\cal P}\cap\ell_{i}$.
Note that ${\cal P}\cap\ell_{i}$ is in $\DD^2$ while ${\cal P}\cap_{\RR}\ell_{i}$ is in $\RR^2$, and that $|{\cal P}\cap\ell_{i}|\ge |{\cal P}\cap_{\RR}\ell_{i}|$.

The lines $\ell_i \subset \RR^2$ are all incident to the origin.
In addition, the points of $({\cal P}\cap_{\RR}\ell_{i})+({\cal P}\cap_{\RR}\ell_{i+1})$ lie in the interior of the wedge formed by $\ell_{i}$ and $\ell_{i+1}$ in the first quadrant of $\RR^{2}$.
Thus, for any $i\ne i'$, the sets $({\cal P}\cap_{\RR}\ell_{i})+({\cal P}\cap_{\RR}\ell_{i+1})$ and $({\cal P}\cap_{\RR}\ell_{i'})+({\cal P}\cap_{\RR}\ell_{i'+1})$ are disjoint.

Fix $0 < i <|\Gamma|$.
For any $a_{1},a_{2}\in \ell_{i}$ and $a_{3},a_{4}\in \ell_{i+1}$ (these are points in $\RR^2$),
we have that $a_{1}+a_{3}\ne a_{2}+a_{4}$ unless $a_{1}=a_{2}$ and $a_{3}=a_{4}$.
Indeed, for variables $c,d\in \RR$, the system $(c,c\cdot\lambda_{i})+(d,d\cdot\lambda_{i+1})=(p_{x},p_{y})$ has a unique solution.
Hence, for any $p,q\in{\cal P}\cap \ell_{i}$ and $r,s\in{\cal P}\cap \ell_{i+1}$ that satisfy $\re (p)\neq \re (q)$ or $\re (r) \neq \re (s)$, we have $p+r\ne q+s$.
Since ${\cal P} = A\times A$ and since $A$ has multiplicity $n^\alpha$, for each $(s,t)\in \RR^2$ at most $n^{2\alpha}$ pairs $(a,b)\in \pts$ satisfy $(\re (a),\re (b)) = (s,t)$.
For each point in ${\cal P}\cap_{\RR}\ell_{i+1}$ we arbitrarily consider one point of ${\cal P}\cap\ell_{i+1}$ that corresponds to it, and denote the resulting set as $S_i$.
Note that $|S_i|\ge |{\cal P}\cap\ell_{i+1}|/n^{2\alpha}$ and that $S_i$ consists of points with distinct real parts.
We claim that $|({\cal P}\cap \ell_{i})+ S_i| = |{\cal P}\cap\ell_{i}| \cdot |S_i|$.
In other words, we claim that every element of $({\cal P}\cap \ell_{i})+ S_i$ can be written as a sum in a unique way.
Indeed, for $s\in S_i$ and $a,a'\in {\cal P}\cap \ell_{i}$ we clearly have $a+s \neq a'+s$ when $\re(a)\neq \re(a')$.
If $\re(a)= \re(a')$ then $a$ and $a'$ have distinct imaginary parts, again implying $a+s \neq a'+s$.
This leads to
\[ \left|({\cal P}\cap \ell_{i})+({\cal P}\cap \ell_{i+1})\right| \ge \left|({\cal P}\cap \ell_{i})+S_i\right| \ge |{\cal P}\cap \ell_{i}|\cdot |{\cal P}\cap \ell_{i+1}|/n^{2\alpha}. \]

Combining this with \eqref{eq:MultEnergyDecomp} yields
\begin{align}
|A+A|^{2} & =|{\cal P}+{\cal P}| >\sum_{i=1}^{|\Lambda|-1}\left|({\cal P}\cap \ell_{i})+({\cal P}\cap\ell_{i+1})\right|  \ge\sum_{i=1}^{|\Lambda|-1}\frac{|{\cal P}\cap\ell_{i}||{\cal P}\cap\ell_{i+1}|}{n^{2\alpha}} \nonumber \\[2mm]
 & \ge\frac{(|\Lambda|-1)2^{2m}}{n^{2\alpha}} \ge\frac{(|\Lambda|-1)2^{2m}}{n^{2\alpha}}\cdot \frac{E^{\times}(A)}{|\Lambda|2^{2m+2}\log n}  = \Omega^*\left(\frac{E^{\times}(A)}{n^{2\alpha}}\right). \label{eq:MultEnergyUpper}
\end{align}

By the Cauchy-Schwarz inequality,
\begin{align*}
E^{\times}(A)  =\sum_{t\in AA}r_{A}^{\times}(t)^{2}  \ge\frac{\left(\sum_{t\in AA}r_{A}^{\times}(t)\right)^{2}}{|AA|} =\frac{n^{4}}{|AA|}.
\end{align*}

Combining this with \eqref{eq:MultEnergyUpper} leads to
\[ |A+A|^{2}n^{2\alpha} = \Omega^*\left(\frac{n^{4}}{|AA|}\right). \]
Rearranging this gives
\[ |A+A|^{2}|AA| =\Omega^*\left(n^{4-2\alpha}\right). \]
This immediately implies the assertion of the theorem.
\end{proof}

\section{Double numbers} \label{sec:Double}

In this section we study double numbers, and in particular prove Theorem \ref{th:DoubleSP}.
In Section \ref{ssec:DoubleLines} we study properties of lines in the double plane.
We derive a point-line incidence bound in $\SSS^2$, and study additional properties of such incidences.
This case is more involved than the analog for dual lines in Section \ref{sec:Dual}, since we cannot easily separate $\SSS^2$ into a real part and an imaginary part as we did for $\DD^2$.
In Section \ref{ssec:ElekDouble} we adapt Elekes' sum-product argument to the double numbers.
As in the dual case, we add several additional steps to Elekes' original approach.
In Section \ref{ssec:SolyDouble}, we adapt Solymosi's sum-product argument to the double numbers.

\subsection{Lines in the double plane} \label{ssec:DoubleLines}

Recall that we denote by $\SSS$ the set of the double numbers: The extension of $\RR$ with the extra element $j$ and the rule $j^2=1$.
We write a number $a\in \SSS$ as $a_1+j a_2$.
Multiplication of double numbers is commutative, and 1 is the unit element.
A double number $a\in \SSS$ has an inverse element if and only if $a_1\neq \pm a_2$ (equivalently, $a_1^2\neq a_2^2$).
The inverse element is then $a^{-1} = \frac{a_1-j a_2}{a_1^2-a_2^2}$.
Indeed,
\[ a\cdot a^{-1} = \frac{(a_1+j a_2)(a_1-j a_2)}{a_1^2-a_2^2} = \frac{a_1^2-a_2^2}{a_1^2-a_2^2} = 1.\]

For $a=a_1+ja_2 \in \SSS$, we define $\Delta^+(a) = a_1+a_2$ and $\Delta^-(a) = a_1-a_2$.
For any $a,b\in \SSS$ where $b$ is invertible, we have
\begin{align}
&\Delta^+(a + b) = \Delta^+(a_1 + b_1+(a_2 + b_2)j) = \Delta^+(a) + \Delta^+(b), \nonumber \\[2mm]
&\Delta^+(a \cdot b) = \Delta^+(a_1b_1+a_2b_2+(a_1b_2+ a_2b_1)j) = a_1b_1+a_2b_2 + a_1b_2+ a_2b_1 = \Delta^+(a)\cdot \Delta^+(b), \nonumber \\[2mm]
&\Delta^+(a/b) = \Delta^+\left(\frac{(a_1+a_2j)(b_1-b_2j)}{b_1^2-b_2^2}\right) \nonumber \\[2mm]
&\hspace{40mm}= \Delta^+\left(\frac{a_1b_1-a_2b_2+(a_2b_1- a_1b_2)j}{b_1^2-b_2^2}\right) = \Delta^+(a)\cdot \Delta^+(b^{-1}). \label{eq:DeltaProp}
\end{align}
It is not difficult to verify that the above equations still hold when replacing $\Delta^+(\cdot)$ with $\Delta^-(\cdot)$.

We define a line in $\SSS^2$ as the set of points on which a linear equation vanishes.
Let $\ell$ be the line defined by $ax+by=c$, where $a,b,c\in \SSS$.
This corresponds to
\begin{align*}
(a_1+j a_2) (x_1+j x_2) + (b_1+j b_2)(y_1+j y_2) &= (c_1+j c_2),
\end{align*}
or equivalently
\begin{align*}
a_1x_1 +a_2x_2  +  b_1y_1 +b_2y_2 &= c_1, \\
a_2x_1 + a_1 x_2 + b_2 y_1 + b_1 y_2 &= c_2.
\end{align*}

The two above equations are linearly dependent if and only if $a_1= \pm a_2$, $b_1= \pm b_2$, and $c_1 = \pm c_2$, where all three $\pm$ represent the same operation.
We can think of $\SSS^2$ as $\RR^4$, and then $\ell$ is either a 2-flat or a hyperplane, depending on whether the two above equations are linearly dependent.
We refer to the lines of the latter type as ``degenerate lines''.
Note that for a line defined by $ax+by=c$ to degenerate, all three $a$, $b$, and $c$ must be non-invertible.

As in the dual case, lines in the double plane can have an infinite intersection, even when excluding non-invertible coefficients in the line equations.
For example, consider the set of non-degenerate lines
\[ \lines = \{ y=(k+(k-1)j)x+((15-3k)+(9-3k)j) :\ k\in \RR \}. \]

It is not difficult to verify that every line of $\lines$ contains the line $\ell$ parameterized by $(c+(3-c)j,12+c+(9-c)j)\in \SSS^2$ with $c\in \RR$.
By taking $n$ lines from $\lines$ and $m$ points on $\ell$, we get $mn$ incidences.
That is, the point--line incidence problem in $\SSS^2$ is trivial.
We now study when collections of lines have an infinite intersection.

\begin{lemma} \label{le:LineFamilyDoub}
In each of the following parts, every $\pm$ represents the same operation, and every $\mp$ represents the other operation. \\
(a) Let $\ell$ and $\ell'$ be distinct lines in $\SSS^2$, respectively defined by $y=ax+b$ and $y=a'x+b'$.
The intersection $\ell\cap\ell'$ contains more than one point if and only if $a_1-a'_1=\pm(a_2-a'_2)\neq 0$, and $b_1-b'_1 = \pm (b_2-b'_2)$.
When these conditions are satisfied, $\ell_1\cap\ell_2$ is a line in $\RR^4$ and every point $x$ in $\ell_1\cap\ell_2$  satisfies $x_1 \pm x_2 = \frac{b'_1-b_1}{a_1-a'_1}$.  \\
(b) Let $\lines$ be a set of lines of the form $y=ax+b$ that have an infinite common intersection.
Then exist $t_1,t_2\in \RR$ such that every line of $\lines$ satisfies $\Delta^\mp(a)=t_1$ and $\Delta^\mp(b)=t_2$.
There also exists $s$ such that every point $(x,y)$ in the infinite intersection satisfies $\Delta^\pm(x)=s$.
\begin{itemize}
\item If $s = 0$ then every line has the same $b$, and every point in the common intersection satisfies $\Delta^{\pm}(y) = \Delta^{\pm}(b)$.
\item If $s \neq 0$ then exist $m,m'\in \RR$ such that every line satisfies $b_1= s(m-a_1)$ and $b_2= s(m'-a_2)$.
Every point in the common intersection satisfies $\Delta^{\pm}(y) = s(m\pm m')$.
\end{itemize}
\end{lemma}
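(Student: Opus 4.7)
The plan is to reduce both parts to real linear algebra after expanding the line equations using $j^2 = 1$. For part (a), setting $ax + b = a'x + b'$ and separating real and imaginary parts gives the $2\times 2$ real system
\begin{align*}
(a_1 - a'_1)\,x_1 + (a_2 - a'_2)\,x_2 &= b'_1 - b_1,\\
(a_2 - a'_2)\,x_1 + (a_1 - a'_1)\,x_2 &= b'_2 - b_2,
\end{align*}
whose determinant factors as $((a_1 - a'_1) - (a_2 - a'_2))((a_1 - a'_1) + (a_2 - a'_2))$. A nonzero determinant yields a unique intersection point, while a vanishing determinant forces $a_1 - a'_1 = \pm(a_2 - a'_2)$; substituting back, consistency of the system becomes the matching sign condition $b_1 - b'_1 = \pm(b_2 - b'_2)$, and the two equations collapse to the single relation $(a_1 - a'_1)(x_1 \pm x_2) = b'_1 - b_1$, a line in $\RR^4$. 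The degenerate subcase $a_1 - a'_1 = a_2 - a'_2 = 0$ would force $a = a'$ and hence $b = b'$, contradicting $\ell \neq \ell'$.

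For part (b), every pair $\ell_i, \ell_j \in \lines$ must intersect infinitely because the common intersection is infinite; part (a) therefore supplies a sign $\sigma_{ij} \in \{+, -\}$ for each pair. The first step is to show this sign is common to the whole family: fix $\ell_0 \in \lines$; inside the two-dimensional flat $\ell_0 \subset \RR^4$, each pairwise condition $\ell_0 \cap \ell_i$ carves out a line $\{x_1 + \sigma_{0i} x_2 = c_i\}$, and two such lines with opposite signs meet in a single point, contradicting infiniteness of the common intersection. Once the sign is fixed (say $+$, the other case being symmetric), the pairwise conditions from part (a) give constants $t_1, t_2 \in \RR$ with $\Delta^-(a) = t_1$ and $\Delta^-(b) = t_2$ for every $\ell \in \lines$, and the line formula from part (a) produces a single constant $s \in \RR$ with $\Delta^+(x) = s$ for every point in the common intersection.

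It remains to describe $\Delta^+(y)$ and to derive the $s = 0$/$s \neq 0$ dichotomy. Using $y = ax + b$ together with the additivity and multiplicativity of $\Delta^+$ recorded in \eqref{eq:DeltaProp}, I get $\Delta^+(y) = s\,\Delta^+(a) + \Delta^+(b)$, and this must equal the same constant $u$ for every $\ell \in \lines$ since all lines share the common intersection. When $s = 0$, this immediately forces $\Delta^+(b) = u$, and pairing with $\Delta^-(b) = t_2$ determines $b_1, b_2$ uniquely, so $b$ is the same on every line and $\Delta^+(y) = \Delta^+(b)$. When $s \neq 0$, solving the two linear relations $\Delta^+(b) = u - s\,\Delta^+(a)$ and $\Delta^-(b) = t_2$ for $b_1, b_2$ in terms of $a_1, a_2$ yields, after a short computation, expressions of the form $b_1 = s(m - a_1)$ and $b_2 = s(m' - a_2)$ where $m$ and $m'$ are determined by $s, t_1, t_2, u$; consequently $\Delta^+(y) = u = s(m + m')$. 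The main obstacle I anticipate is organizing the two sign choices uniformly in the ``$\pm/\mp$'' notation, in particular confirming that the sign cannot vary across distinct pairs of a family with a genuinely infinite (not merely pairwise infinite) common intersection; beyond that the proof is routine real linear algebra together with \eqref{eq:DeltaProp}.
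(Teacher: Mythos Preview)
Your proof is correct and follows essentially the same approach as the paper's: set up the $2\times 2$ real linear system in $x_1,x_2$, factor its determinant to obtain the $\pm$ conditions, and in part~(b) deduce the family-wide constraints from the pairwise conditions of part~(a). Your use of the $\Delta^{\pm}$ identities from \eqref{eq:DeltaProp} to pin down $\Delta^{\pm}(y)$ and to solve for $b_1,b_2$ in the $s\neq 0$ case is a slightly cleaner packaging than the paper's explicit coordinate computation, and your argument for the common sign (intersecting the two hyperplane slices $x_1+x_2=c_i$ and $x_1-x_2=c_j$ inside the $2$-flat $\ell_0$) is more explicit than the paper's, but the underlying ideas are the same.
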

\begin{proof}
(a) To study the intersection points of the two lines, we combine $y=ax+b$ and $y=a'x+b'$, obtaining $ax+b=a'x+b'$, or equivalently $x(a-a')=b'-b$.
Splitting this equation into real and imaginary parts gives
\begin{align}
x_1(a_1-a'_1) + x_2(a_2-a'_2) &= b'_1-b_1,  \nonumber \\
x_1(a_2-a'_2) + x_2(a_1-a'_1) &= b'_2-b_2. \label{eq:TwoLinearEqDouble}
\end{align}

We consider the above as a linear system in $x_1$ and $x_2$.
This system has a unique solution unless $(a_1-a'_1)^2 = (a_2-a'_2)^2$.
That is, the intersection contains at most one point unless $a_1-a'_1=\pm(a_2-a'_2)$.
If $(a_1-a'_1)^2 = (a_2-a'_2)^2=0$ then the two lines are either parallel or identical.
Thus, it remains to study the case where  $a_1-a'_1=\pm(a_2-a'_2)\neq 0$.

We either have that $a_1-a'_1 = a_2-a'_2$ or that $a_1-a'_1 = a'_2-a_2$.
We first consider the former case.
By the equations of \eqref{eq:TwoLinearEqDouble}, either $b'_1-b_1 \neq b'_2-b_2$ and the two lines do not intersect or $b'_1-b_1 = b'_2-b_2$ and the two lines have an infinite intersection.
In the case of an infinite intersection, the equations of \eqref{eq:TwoLinearEqDouble} also imply $x_1 + x_2= \frac{b'_1-b_1}{a_1-a'_1}$.

It remains to consider the case where $a_1-a'_1 = a'_2-a_2 \neq 0$.
By \eqref{eq:TwoLinearEqDouble}, either $b'_1-b_1 \neq b_2-b'_2$ and the two lines do not intersect or $b'_1-b_1 = b_2-b'_2$ and the two lines have an infinite intersection.
In the case of an infinite intersection, the equations of \eqref{eq:TwoLinearEqDouble} also imply $x_1 - x_2= \frac{b'_1-b_1}{a_1-a'_1}$.

(b) If distinct 2-flats in $\RR^4$ have an infinite intersection, then this intersection is a line.
Let $\ell^*$ be the line in $\RR^4$ that is the infinite intersection of the lines of $\lines$.
By part (a), there exists $s\in \RR$ such that every $(x,y)\in \ell^*$ satisfies $\Delta^{\pm}(x) = s$ and every distinct $\ell,\ell'\in \lines$ satisfy $\frac{b'_1-b_1}{a_1-a'_1}=s$.
This implies that the symbol $\pm$ represents the same operation for all lines in $\lines$.
By part (a) we also have that $a_1-a_1' = \pm (a_2-a'_2)$, or equivalently that $\Delta^{\mp}(a) = \Delta^{\mp}(a')$.
That is, every line of $\lines$ has the same value for $\Delta^{\mp}(a)$.
Similarly, the condition $b_1-b'_1 = \pm (b_2-b'_2)$ leads to every line of $\lines$ having the same value for $\Delta^{\mp}(b)$.

If $s=0$, then every two lines $\ell,\ell'\in \lines$ satisfy $\frac{b'_1-b_1}{a_1-a'_1}=0$, or equivalently $b_1=b'_1$.
Since every $\Delta^{\mp}(b)$ has the same value, we also obtain $b_2=b'_2$.
That is, every line of $\lines$ has the same $b$.
Consider a line of $\lines$ defined by $y=ax+b$.
Splitting this equation to real and imaginary parts, we obtain $y_1 = a_1x_1+a_2x_2 +b_1$ and $y_2 = a_1x_2+a_2x_1 +b_2$.
Combining these two equations gives
\[ y_1 \pm y_2 = a_1x_1+a_2x_2 +b_1 \pm (a_1x_2+a_2x_1 +b_2) = (a_1\pm a_2)(x_1\pm x_2) + b_1\pm b_2 = b_1 \pm b_2. \]

If $s\neq0$ then every distinct $\ell,\ell'\in \lines$ satisfy $b'_1-b_1=s(a_1-a'_1)$.
This implies that there exists $m\in \RR$ such that every line of $\lines$ satisfies $b_1= s(m-a_1)$.
By part (a), we have $\frac{b_1-b'_1}{a_1-a'_1} = \frac{b_2-b'_2}{a_2-a'_2}$, or equivalently $\frac{b'_1-b_1}{a_1-a'_1} = \frac{b'_2-b_2}{a_2-a'_2}$.
Thus, $b'_2-b_2=s(a_2-a'_2)$ and there exists $m'\in \RR$ such that every line of $\lines$ satisfies $b_2= s(m'-a_2)$.

Consider a line of $\lines$ defined by $y=ax+b$.
Splitting this equation into real and imaginary parts, we obtain $y_1 = a_1x_1+a_2x_2 +b_1$ and $y_2 = a_1x_2+a_2x_1 +b_2$.
Combining these two equations gives
\begin{align*}
y_1 \pm y_2 &= a_1x_1+a_2x_2 +b_1 \pm (a_1x_2+a_2x_1 +b_2) = (a_1 \pm a_2)(x_1\pm x_2) + (b_1\pm b_2) \\[2mm]
&= s(a_1 \pm a_2) + s(m-a_1 \pm (m'-a_2)) = s(m\pm m').
\end{align*}
\end{proof}

The example before Lemma \ref{le:LineFamilyDoub} was obtained by setting $x_1+x_2 = 3, m=5, m'=2,$ and $a_1-a_2 = 1$.
The rest followed from Lemma \ref{le:LineFamilyDoub}.

Using Lemma \ref{le:LineFamilyDoub}, we can prove Theorem \ref{th:DoubleST}.
This proof is identical to the proof of Theorem \ref{th:DualST}, so we do not repeat it here.

To derive our sum-product bounds in $\SSS$, we need additional properties of point-line incidences in $\SSS^2$.
We refer to a set of lines that have an infinite common intersection as a \emph{family of lines}.
Let $\lines$ be such a family.
By Lemma \ref{le:LineFamilyDoub}(b), there exist constants $s,s'\in \RR$ such that every point $(x,y)\in\SSS^2$ in the common intersection of the lines of $\lines$ satisfies $\Delta^{\pm}(x) = s$ and $\Delta^{\pm}(y) =s'$.
We say that $(s,s')\in \RR^2$ is the \emph{point parameter} of the family $\lines$.
Also by Lemma \ref{le:LineFamilyDoub}(b), there exist $t_1,t_2\in \RR$ such that every line of $\lines$ satisfies $\Delta^\mp(a)=t_1$ and $\Delta^\mp(b)=t_2$.
We define the \emph{line parameter} of $\lines$ to be $(t_1,t_2)$.

We can study the interaction between different line families by studying their point parameters and line parameters.
We say that a line family is \emph{positive} or \emph{negative} according to the meaning of the $\pm$ sign in the definition of the point parameter of the family.
In the notation of Lemma \ref{le:LineFamilyDoub}(b), a family is positive if $s=x_1+x_2$.
We refer to this property as the \emph{sign} of a line family.

\begin{lemma} \label{le:LineFamDouble}
Let $\lines_1$ and $\lines_2$ be distinct line families in $\SSS^2$ with the same sign. \\
(a) If $\lines_1$ and $\lines_2$ do not have the same line parameter, then no line is contained in both families. \\
(b) If $\lines_1$ and $\lines_2$ have the same line parameter and the same value for $\Delta^{\pm}(x)$, then no line is contained in both families. \\
(c) If $\lines_1$ and $\lines_2$ have the same line parameter but not the same $\Delta^{\pm}(x)$, then at most one line of $\SSS^2$ is in both families.
\end{lemma}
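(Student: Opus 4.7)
My approach is to repeatedly invoke the structural description of line families given by Lemma~\ref{le:LineFamilyDoub}(b). Since $\lines_1$ and $\lines_2$ share a sign, the symbols $\pm$ and $\mp$ consistently denote the same operations for both families throughout the argument.

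For part (a), the key observation is that the line parameter of a family of given sign is already determined by any single line in it: any $y = ax + b$ in that family must satisfy $(\Delta^{\mp}(a), \Delta^{\mp}(b)) = (t_1,t_2)$. Hence a line belonging to two families of the same sign would force their line parameters to coincide, proving (a) by contrapositive.

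For part (b), I would split according to the two cases of Lemma~\ref{le:LineFamilyDoub}(b) applied to $s := \Delta^{\pm}(x)$, which is common to both families by hypothesis. When $s = 0$, each family has the form ``all lines with a fixed $b$ satisfying $\Delta^{\mp}(a) = t_1$''; since $\Delta^{\mp}(b) = t_2$ is forced, only the complementary component of $b$ is free, so two distinct families here must have different $b$-values and hence share no line. When $s \ne 0$, the family is pinned down by scalars $(m,m')$ satisfying $b_1 = s(m - a_1)$ and $b_2 = s(m' - a_2)$; a shared line would give $s(m_1 - a_1) = s(m_2 - a_1)$, forcing $m_1 = m_2$ and analogously $m'_1 = m'_2$, so the families would be identical --- contradiction.

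For part (c), I would again split according to whether the values $s_1 := \Delta^{\pm}(x)$ on $\lines_1$ and $s_2$ on $\lines_2$ are both nonzero. If $s_1 = 0$, every line of $\lines_1$ has a single fixed $b$, and the $\lines_2$-constraints $b_1 = s_2(m_2 - a_1)$ and $b_2 = s_2(m'_2 - a_2)$ then determine $a_1, a_2$ uniquely from this $b$ --- at most one shared line. If $s_1, s_2$ are both nonzero and distinct, imposing the $\lines_1$- and $\lines_2$-constraints together yields $(s_2 - s_1)a_1 = s_2 m_2 - s_1 m_1$ and the analogous equation for $a_2$, each with a unique solution since $s_1 \ne s_2$; the values $b_1, b_2$ are then determined as well, giving at most one shared line. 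The main obstacle I anticipate is not any deep calculation but rather carefully tracking the sign conventions through the separate $s = 0$ and $s \ne 0$ branches; once that bookkeeping is in place, every case reduces to observing that two linearly independent affine constraints determine at most one pair $(a, b)$.
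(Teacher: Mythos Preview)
Your proposal is correct and follows essentially the same route as the paper: part~(a) via the observation that a single line already determines the line parameter, and parts~(b) and~(c) by splitting on whether $s=0$ and comparing the $(m,m')$ data coming from Lemma~\ref{le:LineFamilyDoub}(b). If anything, your treatment of~(c) is slightly more careful than the paper's, since you explicitly separate the case $s_1=0$ (where the $b_1=s(m-a_1)$ parametrization is unavailable) before running the two-linear-equations argument.
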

\begin{proof}
(a) By the definition of the line parameter, either the lines of $\lines_1$ and the lines of $\lines_2$ have different values of $\Delta^{\mp}(a)$ or these lines have different values of $\Delta^{\mp}(b)$ (or both).
Since no line can have two different values for $\Delta^{\mp}(a)$ or two different values for $\Delta^{\mp}(b)$, the two families are disjoint.

(b) Let $\pm$ denote the sign of $\lines_1$ and $\lines_2$, let $\mp$ denote the opposite sign, and denote the common value of $\Delta^{\pm}(x)$ as $s$.
Since the two families have the same line parameter and the same sign, every line in $\lines_1 \cup \lines_2$ has the same value of $\Delta^{\mp}(a)$ and the same value of $\Delta^{\mp}(b)$.
We assume for contradiction that there exists a line in $\SSS^2$ that is contained in both families.

We first consider the case of $s=0$.
By Lemma \ref{le:LineFamilyDoub}(b), all the lines in the same family have the same $b$.
Since the two families contain a line in common, every line of $\lines_1 \cup \lines_2$ has the same value of $b_1$ and the same value of $b_2$.
Since these families also have the same value of $\Delta^{\mp}(a)$ they are identical, contradicting the assumption.

Next, consider the case of $s\neq 0$.
By Lemma \ref{le:LineFamilyDoub}(b), in this case there exist $m_1,m_2\in \RR$ such that every line of $\lines_1$ satisfies $b_1 = s(m_1-a_1)$ and every line of $\lines_2$ satisfies $b_1 = s(m_2-a_1)$.
Since there is a line in both families, we obtain $s(m_1-a_1)=s(m_2-a_1)$ or equivalently $m_1=m_2$.
By a symmetric argument, the $m'$ values of both families are identical.
Since the value of $\Delta^{\mp}(a)$ is also identical for both line families, we conclude that the two families are identical, contradicting the assumption.

We got a contradiction in both cases, so the two line families cannot have any lines in common.

(c) Let $\pm$ denote the sign of $\lines_1$ and $\lines_2$, let $\mp$ denote the opposite sign, and let $\ell\subset \SSS^2$ be a line that is in both families.
As in the proof of part (b), every line in $\lines_1 \cup \lines_2$ has the same value of $\Delta^{\mp}(a)$ and the same value of $\Delta^{\mp}(b)$.
Denote the $\Delta^{\pm}(x)$ values of $\lines_1$ and $\lines_2$ as $s_1$ and $s_2$, respectively.
Then there exist $m_1,m_2$ such that $\ell$ satisfies $a_1 = s_1(m_1-b_1)$ and $a_1 = s_2(m_2-b_1)$.
These are two independent linear equations in the variables $a_1,b_1$, and thus have a unique solution.
We conclude that there is at most one line common to both families.
\end{proof}

We can also use the line parameter to study the behavior of line families in $\RR^4$.

\begin{lemma} \label{le:FamInHyper}
When considering every line in $\SSS^2$ as a 2-flat in $\RR^4$, the 2-flats of a line family are all contained in a common hyperplane.
Two line families of the same sign are contained in the same hyperplane if and only if they have the same line parameter.
\end{lemma}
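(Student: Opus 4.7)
The plan is to directly compute the real equations defining a line in $\SSS^2$ and then use the line parameter to exhibit an explicit linear relation that all lines of a family satisfy. After that, the ``only if'' direction reduces to observing that the hyperplane equation determines the line parameter.

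First, I would fix a line $\ell \subset \SSS^2$ defined by $y = ax + b$ with $a,b \in \SSS$, and write it in real coordinates as the 2-flat in $\RR^4$ cut out by
\begin{align*}
y_1 &= a_1 x_1 + a_2 x_2 + b_1, \\
y_2 &= a_2 x_1 + a_1 x_2 + b_2.
\end{align*}
Subtracting and adding these two equations gives
\begin{align*}
y_1 - y_2 &= (a_1 - a_2)(x_1 - x_2) + (b_1 - b_2) \;=\; \Delta^-(a)\cdot \Delta^-(x) + \Delta^-(b),\\
y_1 + y_2 &= (a_1 + a_2)(x_1 + x_2) + (b_1 + b_2) \;=\; \Delta^+(a)\cdot \Delta^+(x) + \Delta^+(b).
\end{align*}
These identities are the key computation; everything else is bookkeeping.

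Next I would treat the positive case. Let $\lines$ be a positive line family with line parameter $(t_1,t_2)$, so by definition every line in $\lines$ satisfies $\Delta^-(a)=t_1$ and $\Delta^-(b)=t_2$. Substituting into the first identity, every point on every line of $\lines$ satisfies
\[ y_1 - y_2 \;=\; t_1(x_1 - x_2) + t_2, \]
which is a single hyperplane $H_\lines \subset \RR^4$. The negative case is symmetric, using the second identity and $\Delta^+$. This establishes the first assertion of the lemma, and also exhibits the hyperplane $H_\lines$ explicitly in terms of the line parameter and the sign.

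Finally, for the second assertion, suppose $\lines_1$ and $\lines_2$ are same-sign families; say both are positive with line parameters $(t_1,t_2)$ and $(t_1',t_2')$. They lie in the same hyperplane iff the two equations $y_1-y_2 = t_1(x_1-x_2)+t_2$ and $y_1-y_2 = t_1'(x_1-x_2)+t_2'$ define the same hyperplane in $\RR^4$. Since the coefficients of $y_1-y_2$ are normalized to $1$, this forces $t_1 = t_1'$ and $t_2 = t_2'$, i.e.\ the same line parameter. The negative case is identical with $\Delta^+$ in place of $\Delta^-$. No step here is a serious obstacle; the only thing to be careful about is to match the sign of the family to the correct identity ($\Delta^-$ for positive families, $\Delta^+$ for negative families), which is exactly how ``line parameter'' was defined.
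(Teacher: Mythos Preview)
Your proof is correct and follows essentially the same approach as the paper: both compute the real equations of $y=ax+b$, combine them to obtain $y_1\mp y_2=(a_1\mp a_2)(x_1\mp x_2)+(b_1\mp b_2)$, and then read off the hyperplane from the line parameter. Your write-up is slightly more explicit than the paper's in spelling out both signs and the ``only if'' direction via the normalized coefficient of $y_1\mp y_2$, but the argument is the same.
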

\begin{proof}
Consider a line family $\lines$ with line parameter $(t,t')$, and a line from $\lines$ defined by $y=ax+b$.
Splitting this equation into real and imaginary parts, we obtain $y_1 = a_1x_1+a_2x_2 +b_1$ and $y_2 = a_1x_2+a_2x_1 +b_2$.
Combining these two equations leads to
\[ y_1\mp y_2 = (a_1\mp a_2)(x_1\mp x_2) + (b_1 \mp b_2) = t(x_1 \mp x_2)+t'.\]

That is, every line of $\lines$ corresponds to a 2-flat that is contained in the hyperplane defined by $y_1 \mp y_2=t(x_1 \mp x_2)+t'$.
It can now be easily verified that two families are contained in the same hyperplane if and only if they have the same line parameter $(t,t')$.
\end{proof}

Finally, we study the interaction between two line families with opposite signs.

\begin{lemma} \label{le:OppositeSigns}
Let $\lines_1$ and $\lines_2$ be distinct line families in $\SSS^2$ with opposite signs.
Then at most one line is contained in both families.
\end{lemma}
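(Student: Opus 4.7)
The plan is to prove this by observing that a line of $\SSS^2$ is determined by its four real coefficients $a_1,a_2,b_1,b_2$ (when written as $y=ax+b$), and that belonging to two line families of opposite signs imposes two linearly independent constraints on $(a_1,a_2)$ and two more on $(b_1,b_2)$, pinning the line down uniquely.

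In detail, suppose $\lines_1$ is positive and $\lines_2$ is negative. By the definition of sign together with Lemma \ref{le:LineFamilyDoub}(b), a positive family has its line parameter referring to the $\mp = -$ branch, so there exist $t_1,t_2\in\RR$ such that every line $y=ax+b$ in $\lines_1$ satisfies $\Delta^-(a)=a_1-a_2=t_1$ and $\Delta^-(b)=b_1-b_2=t_2$. Similarly, there exist $t_1',t_2'\in\RR$ such that every line in $\lines_2$ satisfies $\Delta^+(a)=a_1+a_2=t_1'$ and $\Delta^+(b)=b_1+b_2=t_2'$.

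If a line $y=ax+b$ lies in both $\lines_1$ and $\lines_2$, then its coefficient $a=a_1+ja_2$ satisfies the system $a_1-a_2=t_1$ and $a_1+a_2=t_1'$. This $2\times 2$ linear system has the unique solution $a_1=(t_1+t_1')/2$, $a_2=(t_1'-t_1)/2$, so $a$ is determined. The identical argument with $b$ in place of $a$ and $(t_2,t_2')$ in place of $(t_1,t_1')$ determines $b$ uniquely. Hence at most one line $y=ax+b$ lies in $\lines_1\cap\lines_2$, as claimed.

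No real obstacle appears; this is a short linear-algebra deduction once one unwinds the notational convention that relates the sign of a family to which of $\Delta^+$ or $\Delta^-$ governs the coefficients $a,b$. The only bookkeeping point is to make sure the $\mp$/$\pm$ convention in Lemma \ref{le:LineFamilyDoub}(b) is applied with the correct orientation for each of $\lines_1$ and $\lines_2$, so that one really obtains a pair of independent conditions (one from $\Delta^-$, one from $\Delta^+$) on each of $a$ and $b$.
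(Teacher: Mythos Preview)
Your proof is correct and follows essentially the same approach as the paper: both observe that one family fixes $\Delta^-(a),\Delta^-(b)$ while the other fixes $\Delta^+(a),\Delta^+(b)$, and these four linear constraints uniquely determine $a_1,a_2,b_1,b_2$. Your version is slightly more explicit in writing out and solving the $2\times 2$ systems, but the argument is the same.
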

\begin{proof}
Without loss of generality, assume that the lines of $\lines_1$ have the same value of $\Delta^+(a)$ and that the lines of $\lines_2$ have the same value of $\Delta^-(a)$.
Then all the lines of $\lines_1$ have the same value of $\Delta^+(b)$ and all the lines of $\lines_2$ have the same value of $\Delta^-(b)$.
It can be easily verified that there are unique values for $a_1,a_2,b_1,b_2$ that satisfy all four restrictions.
We conclude that at most one line can be in both families.
\end{proof}

\subsection{Adapting Elekes' argument to double numbers} \label{ssec:ElekDouble}

We are now ready to adapt the proof from Section \ref{ssec:ElekDual} to the double numbers.
The two proofs are similar, but not identical.
We thus provide most of the proof, skipping only the last part, which is technical calculation identical to the one in Section \ref{ssec:ElekDual}.
We first repeat the relevant part of Theorem \ref{th:DoubleSP}.

\begin{theorem}
Let $A$ be a set of $n$ double numbers and multiplicity $n^{\alpha}$, for some $0 \le \alpha < 1/2$.
Then for any $\rho>0$,
\[ \max\{|A+A|,|AA|\} = \begin{cases}
\Omega^*\left(n^{3/2-3\alpha/4}\right), & \quad 1/3\le\alpha<1/2, \\
\Omega\left(n^{5/4-\rho}\right), & \quad 0\le \alpha<1/3.
\end{cases} \]
\end{theorem}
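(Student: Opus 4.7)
The plan is to closely follow the proof of Theorem \ref{th:DualMainCase}, adapting each ingredient to the geometry of $\SSS^2$. First I would discard the non-invertible elements of $A$; by the double multiplicity assumption there are at most $2n^\alpha$ such elements (those with $a_1 = a_2$ and those with $a_1 = -a_2$), so this does not affect the asymptotics of $|A|$ and can only decrease $|A+A|$ and $|AA|$. Then I would set up the same configuration as in the dual case: the point set $\pts = (A+A) \times (AA) \subset \SSS^2$ and the line set $\lines = \{ y = c(x-d) : c,d \in A\}$, with $|\lines| = n^2$, $|\pts| = |A+A|\cdot |AA|$, and $I(\pts, \lines) \ge n^3$ (since for each $(c,d,b) \in A^3$ the line $y=c(x-d)$ contains $(d+b, cb) \in \pts$). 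The proof again rests on deriving matching upper bounds on $I(\pts, \lines)$.

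Next, I would partition incidences into standard and special ones exactly as before, declaring $(p, \ell)$ special when some $\ell' \neq \ell$ incident to $p$ shares a line family with $\ell$. Viewing $\SSS^2$ as $\RR^4$, each line becomes a 2-flat, and at any point of a standard incidence the two 2-flats meet in exactly that point, so Theorem \ref{th:IncR4} applies and yields $O(|\pts|^{2/3 + \rho} |\lines|^{2/3} + |\pts| + |\lines|)$ standard incidences. If the standard incidences dominate, combining with $I(\pts, \lines) \ge n^3$ already gives $|A+A|\cdot|AA| = \Omega^*(n^{5/2-\rho})$, which dominates the claimed bounds for $\alpha < 1/2$.

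The heart of the proof is the special incidence count, where I would decompose by the same four dyadic parameters $(\alpha', \beta, \gamma, \delta)$ as in the dual case: here $\alpha'$ captures the multiplicity of line parameters $(\Delta^\mp a, \Delta^\mp b)$ among lines of $\lines$, $\beta$ the size of an individual line family, $\gamma$ the number of points of $\pts$ sharing a point parameter, and $\delta$ the number of families sharing a given point parameter. Double-plane line families have a sign, but by Lemma \ref{le:OppositeSigns} any two families of opposite sign share at most one line, so the two signs can be analyzed independently and summed with only a constant loss. Within a fixed sign, Lemma \ref{le:LineFamDouble}(c) replaces Lemma \ref{le:LineFamProp}(b) and yields $|F| = O(n^{2+2\alpha'-2\beta})$ in exactly the same way, and Lemma \ref{le:LineFamDouble}(b) (families with equal line parameter and equal $\Delta^\pm(x)$ share no lines) replaces Lemma \ref{le:LineFamProp}(a) and produces the three bounds on $|S|$. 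The inner Szemer\'edi--Trotter step is then carried out inside the 2-flat $\{\Delta^\pm(x)=s,\ \Delta^\pm(y)=s'\}$ associated with a given point parameter, which by Lemma \ref{le:FamInHyper} plays the role of the imaginary plane from the dual analysis; families sharing that point parameter cut this 2-flat in distinct lines, each carrying $\Theta(n^\beta)$ lines of $\lines$, and we get the same $O(n^\beta(n^{2(\delta+\gamma)/3} + n^\delta + n^\gamma))$ incidences per point parameter.

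The main obstacle, and the reason this is a separate theorem rather than a corollary of Theorem \ref{th:DualMainCase}, is verifying that the geometric picture really does match. In the dual plane, every line has a genuine real line obtained by forgetting the $\eps$-part, and all families with a common real line live above it in a parallel copy of $\RR^2$; in $\SSS^2$ the analog must be constructed in terms of the $\Delta^\pm$ coordinates, and one must juggle the sign of each family throughout the bookkeeping. Once past this sign bookkeeping — in particular checking that the bound $|T| = O(n^{2-2\alpha'})$ on line parameters, the three upper bounds on $|S|$, and the per-point-parameter Szemer\'edi--Trotter estimate all survive the split into positive and negative families — the final arithmetic of Theorem \ref{th:DualMainCase} transfers essentially verbatim and yields the claimed bounds.
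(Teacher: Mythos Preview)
Your plan matches the paper's proof essentially step for step: discard non-invertibles, set up $\pts=(A+A)\times(AA)$ and $\lines=\{y=c(x-d)\}$, split into standard versus special incidences, restrict to one sign of family, and run the same dyadic $(\alpha',\beta,\gamma,\delta)$ bookkeeping to recover the identical inequalities and the final calculation of Theorem~\ref{th:DualMainCase}. One small clarification: Lemma~\ref{le:FamInHyper} is not what justifies the inner Szemer\'edi--Trotter step inside the point-parameter $2$-flat (that step needs only that distinct families with the same point parameter cut that flat in distinct lines); rather, the paper invokes Lemma~\ref{le:FamInHyper} to obtain the third bound on $|S|$, by projecting the hyperplanes associated with line parameters and the $2$-flats associated with point parameters onto a generic $2$-flat $H\subset\RR^4$, turning the question into a rich-points estimate for lines in $\RR^2$.
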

\begin{proof}
By the multiplicity assumption, $A$ contains at most $2n^\alpha$ non-invertible elements.
We discard these elements.
This does not change the asymptotic size of $A$ and can only decrease the sizes of $A+A$ and $AA$.
Thus, it suffices to prove the bound for the resulting smaller set.
Abusing notation, in the rest of the proof we refer to this revised set as $A$.

Consider the point set
\[ \pts = \{ (c,d)\in\DD^2 :\, c\in A+A \quad \text{ and } \quad d\in AA \}, \]
and the set of lines
\[ \lines = \{ y=c(x-d) :\, c,d\in A \}. \]
Note that $|\lines| =n^2$ and $|\pts| = |A+A|\cdot|AA|$.
Since the revised $A$ consists only of invertible elements, there are no degenerate lines in $\lines$.
We think of $\pts$ both as a point set in $\SSS^2$ and as a point set in $\RR^4$.
Similarly, we think of $\lines$ both as a set of lines in $\SSS^2$ and as a set of 2-flats in $\RR^4$.

The proof is based on double counting $I(\pts,\lines)$.
A line defined by $y=c(x-d)$ contains every point of $\pts$ of the form $(d+b,cb)$ for every $b\in A$.
That is, we have that $I(\pts,\lines) \ge |\lines||A| = n^3$.
For the rest of the proof we will derive upper bounds for $I(\pts,\lines)$.

We partition the incidences in $\pts\times\lines$ into two types, as follows.
We say that an incidence $(p,\ell)\in \pts\times\lines$ is \emph{special} if there exists a second line $\ell'\in \lines$ such that $\ell$ and $\ell'$ are members of the same line family, and $p$ is in the infinite intersection of this family.
If an incidence $(p,\ell)\in \pts\times\lines$ is not special, then we say that it is a \emph{standard} incidence.

We first bound the number of standard incidences.
By considering $\SSS^2$ as $\RR^4$, we obtain an incidence problem with two-dimensional flats.
If $(p,\ell_1)$ and $(p,\ell_2)$ are standard incidences, then $\ell_1\cap \ell_2 = \{p\}$.
These are regular incidences, as defined in Theorem \ref{th:IncR4}.
By that theorem, the number of standard incidences is $O\left(|\pts|^{2/3+\rho}|\lines|^{2/3}+|\pts|+|\lines|\right)$.

When the number of standard incidences is larger than the number of special incidences, we have that
\[ I(\pts,\lines) = O\left(|\pts|^{2/3+\rho}|\lines|^{2/3}+|\pts|+|\lines|\right) = O\left(|A+A|^{2/3+\rho}|AA|^{2/3+\rho}n^{4/3}+ |A+A||AA|\right). \]

Combining this with $I(\pts,\lines) \ge n^3$ leads to $|A+A||AA| = \Omega(n^{5/2-\rho})$.
This immediately implies the assertion of the theorem, for any $0\le \alpha < 1/2$.

\parag{Handling special incidences.}
It remains to consider the case where the number of special incidences is larger than the number of standard incidences.
We say that a special incidence $(p,\ell)$ corresponds to a line family $\lines^*$ if the line $\ell$ is contained in the line family and the point $p$ is in the infinite intersection of the family.
By the definition, each special incidence corresponds to at least one line family.
By Lemmas \ref{le:LineFamDouble} and \ref{le:OppositeSigns}, a special incidence can correspond to at most one positive family and to at most one negative family.
In the rest of the analysis, we assume that at least half of the special incidences correspond to a positive family.
The other case, in which at least half of the special incidences correspond to a negative family, is handled in a symmetric manner.

We remove the special incidences that are not associated with positive line family.
By the above assumption, this does not asymptotically change $I(\pts,\lines)$.
The removal process may turn some special incidences to standard incidences.
We bound the number of new standard incidences in the same way we bound the number of the original standard incidences.
As before, if most of the original special incidences became standard incidences, then we are done.

It remains to study the case where most of the original value of $I(\pts,\lines)$ comes from the remaining special incidences.
Denote by $I(\alpha',\beta,\gamma,\delta)$ the number of special incidences $(p,\ell)\in \pts\times \lines$ that satisfy the following.
Let $\lines^*$ be the positive line family that corresponds to $(p,\ell)$, let $(s,s')$ be the point parameter of $\lines^*$, and let $(t,t')$ be the line parameter of $\lines^*$.
\begin{itemize}
\item The number of lines of $\lines$ that satisfy $t=\Delta^-(a)$ and $t'=\Delta^-(b)$ is at least $n^{2\alpha'}$ and smaller than $2n^{2\alpha'}$.
\item The number of lines of $\lines$ that are in $\lines^*$ is at least $n^\beta$ and smaller than $2n^\beta$.
\item The number of points $(x,y)\in\pts$ that satisfy $s=\Delta^+(x)$ and $s'=\Delta^+(y)$ is at least $n^\gamma$ and smaller than $2n^\gamma$.
\item The pair $(s,s')$ is the point parameter of at least $n^\delta$ and fewer than $2n^\delta$ positive line families that satisfy the property stated in the second item.
\end{itemize}

Note that we can take $\Theta(\log^4 n)$ elements $I(\alpha',\beta,\gamma,\delta)$ such that every special incidence is counted in at least one of those elements.
Thus, the number of special incidences is at most the maximum size of $I(\alpha',\beta,\gamma,\delta)$ times $\Theta(\log^4 n)$.

We study some basic properties of the parameters $\alpha',\beta,\gamma,\delta$ that maximize $I(\alpha',\beta,\gamma,\delta)$.
For that purpose, we assume that $\alpha',\beta,\gamma,\delta$ are fixed.
We denote by $S$ the set of point parameters that participate in incidences of $I(\alpha',\beta,\gamma,\delta)$.
Let $F$ be the set of line families that contain at least $n^\beta$ lines of $\lines$ and fewer than $2n^{\beta}$ such lines.
Let $T$ be the set of line parameters of the families of $F$.
Since $|\lines|=n^2$ and every pair of $T$ corresponds to $\Omega(n^{2\alpha'})$ lines of $\lines$, we get that $|T| =
O(n^{2-2\alpha'})$.

We study how many lines of $\lines$ can correspond to a given line parameter $(t,t')\in T$.
Recalling that every line of $\lines$ is of the form $y=c(x-d)$, we note that $c$ uniquely determines $t$.
Since $A$ has multiplicity $n^\alpha$, there are $O(n^\alpha)$ choices for $c$.
Recalling from \eqref{eq:DeltaProp} that $\Delta^-(a\cdot b) = \Delta^-(a)\cdot \Delta^-(b)$, we get that for a fixed $c$ there are $O(n^\alpha)$ choices for $d$.
Thus, the number of lines in $\lines$ with a given line parameter is $O(n^{2\alpha})$.
This implies that $0\le \alpha'\le \alpha$.
We also have that $\beta \le 2\alpha'$, since otherwise there are not enough lines with the same line parameter to create a family in $F$.

We now consider how many lines from $\lines$ can be part of the same line family.
In general we consider lines defined by an equation of the form $y=(a_1+j a_2) x + (b_1+j b_2)$, and a line in $\lines$ is defined by an equation of the form $y=c(x-d)$ with $c,d\in A$.
By Lemma \ref{le:LineFamilyDoub}, all the lines in the same positive family have the same $\Delta^-(a)$ and $\Delta^-(b)$.
The multiplicity of $A$ implies that there are at most $n^\alpha$ possible values $c$.
Also by Lemma \ref{le:LineFamilyDoub}, either all the lines in a family have the same $b$, or they all satisfy relations of the form $b_1 = s(m-a_1)$ and $b_2 = s(m-a_2)$.
In either case, the values of $b$ are uniquely determined by the values of $a$.
That is, choosing $c$ uniquely determines $d$.
We conclude that every family contains at most $n^\alpha$ lines from $\lines$, so $0 \le \beta \le \alpha$.

Recall from \eqref{eq:DeltaProp} that $\Delta^+(a+b) = \Delta^+(a)+\Delta^+(b)$.
Since the multiplicity of $A$ is $n^\alpha$, at most $n^{1+\alpha}$ sums $x$ in $A+A$ can have the same value for $\Delta^+(x)$.
Similarly, since $\Delta^+(a\cdot b) = \Delta^+(a)\cdot \Delta^+(b)$, at most $n^{1+\alpha}$ products $y$ in $AA$ can have the same value for $\Delta^+(y)$.
Since $\pts= (A+A)\times (AA)$, at most $n^{2+2\alpha}$ points of $\pts$ can correspond to the same point parameter $(x,y)$ in  $\SSS^2$.
Thus, $0 \le \gamma \le 2+2\alpha$.
We also have the straightforward bound $n^\gamma \le |\pts|$, or equivalently $\gamma \le (\log |\pts|)/\log n$.

Next, we consider the maximum number of line families of $F$ that can have the same point parameter.
Recall that every line of a positive family with point parameter $(s,s')$ satisfies $s' = (a_1+a_2)s + (b_1+b_2)$ (for example, see the proof of Lemma \ref{le:LineFamilyDoub}(b)).
Since the lines of $\lines$ are defined as $y=c(x-d)$ with $c,d\in A$, for every choice of $c$ and a point parameter, the value of $b_1+b_2$ is uniquely determined.
This implies that a specific point parameter has $O(n^{1+\alpha})$ lines of $\lines$ corresponding to it.
We conclude that $0 \le \delta \le 1+\alpha - \beta$.

To recap:
\begin{align*}
0\le \alpha', \beta \le &\alpha, \qquad \beta \le 2\alpha', \qquad 0 \le \delta \le 1+\alpha-\beta, \\[2mm]
0 \le &\gamma \le \min\left\{2+2\alpha, (\log |\pts|)/\log n\right\}.
\end{align*}

We next bound the number of families in $F$.
Recall that $|T|=O(n^{2-2\alpha'})$, and that each pair of $T$ corresponds to fewer than $2n^{2\alpha'}$ lines of $\lines$.
For a fixed $(t,t')\in T$, by Lemma \ref{le:LineFamDouble} every two families corresponding to $(t,t')$ have at most one line in common.
There are fewer than $\binom{2n^{2\alpha'}}{2} = O(n^{4\alpha'})$ pairs of lines of $\lines$ that correspond to $(t,t')$.
Every pair of lines can appear together in at most one line family, and each line family subsumes at least $\binom{n^{\beta}}{2} = \Theta(n^{2\beta})$ such pairs.
Thus, the number of families that correspond to $(t,t')$ is $O(n^{4\alpha'-2\beta})$.
By summing up over every $(t,t')\in T$, we obtain that $|F| = O(n^{2+2\alpha'-2\beta})$.

Since each pair $(s,s')\in S$ corresponds to $\Theta(n^\gamma)$ points of $\pts$, we have $|S| = O(|\pts|/n^\gamma)$.
Since $|F| = O(n^{2+2\alpha'-2\beta})$ and each point parameter subsumes $\Theta(n^{\delta})$ families of $F$, we obtain $|S| = O(n^{2+2\alpha'-2\beta-\delta})$.

We think of a point parameter $(s,s')$ as corresponding to the 2-flat in $\RR^4$ defined by $s=x_1+x_2$ and $s'=y_1+y_2$.
By Lemma \ref{le:FamInHyper}, a line family is fully contained in a hyperplane in $\RR^4$, and two positive families are contained in the same hyperplane if and only if they have the same line parameter.
Let $H$ be a generic 2-flat in $\RR^4$, such that $H$ intersects every 2-flat that corresponds to a point parameter $(s,s')\in S$ in a single distinct point, and that $H$ intersects every hyperplane containing a line family at a distinct line.
Let $\pts_H$ be the resulting set of $|S|$ points in $H$ and let $\lines_H$ be the resulting family of $|T|$ lines in $H$.
By definition, every point of $\pts_H$ is incident to $\Omega(n^{\delta-2\alpha'+\beta})$ lines of $\lines_H$.
Recalling that $|T|=O(n^{2-2\alpha'})$, Theorem \ref{th:DualFormST} implies that
\[ |S|=O\left(\frac{(n^{2-2\alpha'})^2}{(n^{\delta-2\alpha'+\beta})^3} + \frac{n^{2-2\alpha'}}{n^{\delta-2\alpha'+\beta}}\right) = O\left(n^{4+2\alpha'-3\delta-3\beta} + n^{2-\delta-\beta}\right).\]

Recall that a point parameter $(s,s')\in S$ is associated with the plane in $\RR^4$ defined by $x_1+x_2=s$ and $x_3+x_4=s'$.
Denote this plane as $h$.
There are $\Theta(n^\delta)$ families with point parameter $(s,s')$, each intersecting in a common line in $h$.
There are $\Theta(n^\gamma)$ points of $\pts$ in $h$.
The intersection lines of the different families are distinct by definition.
By the Szemer\'edi--Trotter theorem, the number of incidences between these points and lines is $O(n^{2(\delta+\gamma)/3}+n^\delta+n^\gamma)$.
Since each line in the imaginary plane corresponds to $\Theta(n^\beta)$ lines of $\lines$, the number of incidences in the $h$ is
\begin{equation} \label{eq:IncInPntParam}
O\left(n^\beta\left(n^{2(\delta+\gamma)/3}+n^\delta+n^\gamma\right)\right).
\end{equation}

Note that \eqref{eq:IncInPntParam} is identical to \eqref{eq:IncInSpecialPnt}.
In addition, we obtained the exact same bounds for $\alpha',\beta,\gamma,\delta, |T|,|S|$, and $|F|$ as in the proof of Theorem \ref{th:DualMainCase}.
We may thus repeat the technical calculation at the end of the proof of Theorem \ref{th:DualMainCase}.
We do not repeat the entire calculation here.
As in the proof of Theorem \ref{th:DualMainCase}, this leads to $\max\{|A+A|,|AA|\} = \Omega^*\left(n^{3/2-5\alpha/8}\right)$.
\end{proof}

Proving the bound of Theorem \ref{th:DoubleSP} for the case where $1/2 \le \alpha < \kappa$ is identical to the proof of Corollary \ref{co:DualLargeAlpha}.
Thus, we do not repeat this proof here.

\subsection{Adapting Solymosi's argument to double numbers} \label{ssec:SolyDouble}

We now adapt Solymosi's sum-product argument \cite{Soly09} to sets of double numbers.

\begin{theorem}
Let $A$ be a set of $n$ dual numbers with multiplicity $n^{\alpha}$, for some $0\le \alpha<1/2$.
Then
\[ \max\{|A+A|,|AA|\}=\Omega^*\left(n^{(4-2\alpha)/3}\right). \]
\end{theorem}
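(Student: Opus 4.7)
The plan is to adapt Solymosi's 2009 argument to sets of double numbers by using the ring homomorphism $\Delta^+\colon \SSS\to\RR$ in the role that the real part $\re(\cdot)$ plays in the real and dual arguments. The identities in \eqref{eq:DeltaProp} say that $\Delta^+$ respects sums, products, and quotients, and the multiplicity hypothesis on $A$ bounds each fiber of $\Delta^+|_A$ by $n^\alpha$, so $\Delta^+$ functions as a well-behaved ``real part'' for combinatorial purposes (the symmetric choice $\Delta^-$ would work equally well).

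After discarding the at most $2n^\alpha$ non-invertible elements of $A$ and then passing to a subset of size $\Theta(n)$ on which $\Delta^+$ is positive (possibly negating $A$ first), both of which can only decrease $|A+A|$ and $|AA|$, I define
\[ r_A^{\div,+}(\lambda) = \left|\left\{(a,a')\in A^2 : \Delta^+(a/a')=\lambda\right\}\right|, \]
so $\sum_\lambda r_A^{\div,+}(\lambda)=n^2$. The identity $\Delta^+(ab)=\Delta^+(a)\Delta^+(b)$ implies that every quadruple counted by the multiplicative energy $E^\times(A)$ yields a collision $\Delta^+(a/c)=\Delta^+(d/b)$, giving $E^\times(A)\le \sum_\lambda r_A^{\div,+}(\lambda)^2$. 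A dyadic pigeonhole then produces some $m$ and a level set $\Lambda=\{\lambda : 2^m\le r_A^{\div,+}(\lambda)<2^{m+1}\}$ with $|\Lambda|\,2^{2m+2}\log n > E^\times(A)$.

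With $\pts=A\times A\subset\SSS^2$, for each $\lambda_i\in\Lambda$ arranged in increasing order let $\ell_i\subset\RR^2$ be the line $y=\lambda_i x$ and set $\pts\cap\ell_i=\{(a,b)\in\pts : \Delta^+(a)=\lambda_i\Delta^+(b)\}$, so that $|\pts\cap\ell_i|=r_A^{\div,+}(\lambda_i)\in[2^m,2^{m+1})$. Because every $\lambda_i$ and every $\Delta^+$ value is positive, the $(\Delta^+\!\times\Delta^+)$-images of $(\pts\cap\ell_i)+(\pts\cap\ell_{i+1})$ lie in the disjoint open wedges between consecutive $\ell_i,\ell_{i+1}$ in the first quadrant, so the $\SSS^2$-sets $(\pts\cap\ell_i)+(\pts\cap\ell_{i+1})$ are themselves pairwise disjoint. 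Using that at most $n^{2\alpha}$ points of $\pts$ share a common $(\Delta^+\!\times\Delta^+)$-image, I pick $S_i\subset\pts\cap\ell_{i+1}$ of size at least $r_A^{\div,+}(\lambda_{i+1})/n^{2\alpha}$ with pairwise distinct $\Delta^+$-projections. The key injectivity claim is that $|(\pts\cap\ell_i)+S_i| = |\pts\cap\ell_i|\cdot|S_i|$: if $P+R=P'+R'$ with $P,P'\in\pts\cap\ell_i$ and $R,R'\in S_i$, then applying $\Delta^+$ and invoking Solymosi's uniqueness argument for two distinct lines through the origin gives $\Delta^+(P)=\Delta^+(P')$ and $\Delta^+(R)=\Delta^+(R')$; distinctness on $S_i$ forces $R=R'$, after which matching $\Delta^-$ on both sides of $P+R=P'+R$ yields $\Delta^-(P)=\Delta^-(P')$, hence $P=P'$. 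I expect this injectivity claim to be the main subtlety, since in the dual-number setting only one projection is needed, while here $\Delta^+$ is used to exploit the wedge structure and $\Delta^-$ is then needed to recover the full points in $\SSS^2$.

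Combining the pieces,
\[ |A+A|^2 = |\pts+\pts| \ge \sum_i \left|(\pts\cap\ell_i)+(\pts\cap\ell_{i+1})\right| \ge \frac{(|\Lambda|-1)\,2^{2m}}{n^{2\alpha}} = \Omega^*\!\left(E^\times(A)/n^{2\alpha}\right). \]
The Cauchy--Schwarz bound $E^\times(A)\ge n^4/|AA|$ then gives $|A+A|^2\cdot|AA|=\Omega^*(n^{4-2\alpha})$, and the stated bound $\max\{|A+A|,|AA|\}=\Omega^*(n^{(4-2\alpha)/3})$ follows immediately.
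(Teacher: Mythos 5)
Your proposal is correct and follows essentially the same route as the paper: it is Solymosi's multiplicative-energy argument with the homomorphism $\Delta^+$ playing the role of $\re$, including the pruning to positive parameter, the dyadic pigeonholing, the wedge disjointness in the $(\Delta^+,\Delta^+)$-projection, the selection of representatives with distinct projections to handle the multiplicity $n^{2\alpha}$, and the final Cauchy--Schwarz step giving $|A+A|^2|AA|=\Omega^*(n^{4-2\alpha})$. Your injectivity verification (recovering $P=P'$ after cancelling $R$) matches the paper's, and the only difference is cosmetic.
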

\begin{proof}
The proof is similar to the proof of Theorem \ref{th:SolyDual}, with $\re(a)$ replaced by $\Delta^+(a)$.
Equations \eqref{eq:DeltaProp} illustrate that $\Delta^+(a)$ has the same arithmetic properties we used with $\re(a)$.
For $a\in \SSS$, we refer to $\Delta^+(a)$ as the \emph{parameter} of $a$.

For $\lambda \in \RR$, we define
\begin{align*}
r_{A}^{\times}(\lambda)&=\left|\left\{ (a,a')\in A^{2}\ :\ \Delta^+(aa')=\lambda\right\} \right|, \\[2mm]
r_{A}^{\div}(\lambda)&=\left|\left\{ (a,a')\in A^{2}\ :\ \Delta^+(a/a')=\lambda\right\} \right|.
\end{align*}
In other words, $r_{A}^{\times}(\lambda)$ is the number of ways to obtain $\lambda$ as the parameter of a product of two elements of $A$, and similarly for $r_{A}^{\div}(\lambda)$.

We repeat the pruning steps of $A$ as in the proof of Theorem \ref{th:SolyDual}, to obtain that every element of $A$ has a positive parameter.
We then repeat the multiplicative energy calculation from the proof of Theorem \ref{th:SolyDual}.
This implies that exists $0\le m<\log n$ with $\Lambda=\left\{ \lambda\in \Delta^+(A/A)\ :\ 2^m\le\r \lambda<2^{m+1}\right\}$ such that
\begin{equation*}
1>\frac{E^{\times}(A)}{|\Lambda|2^{2m+2}\log n}.
\end{equation*}
(The multiplicative energy $E^{\times}(A)$ is defined as in Section \ref{ssec:SolyDual}.)

Consider the planar point set ${\cal P}=A\times A\subset\SSS^{2}.$
Since ${\cal P}+{\cal P}=(A+A)\times(A+A)$, we have that $|{\cal P}+{\cal P}|=|A+A|^{2}$.

For each $1\le i\le|\Lambda|,$ let $\ell_{i}$ denote the line in $\RR^{2}$ defined by $y=\lambda_{i}x$.
Let $\pts\cap\ell_{i}$ be the set of points $(a,b)\in {\cal P}$ that satisfy $(\Delta^+(a),\Delta^+(b))\in \ell_{i}$ (equivalently, $\Delta^+(a)=\lambda_{i} \cdot \Delta^+(b)$).
By definition, for each of the $|\Lambda|$ lines we have $2^m \le |{\cal P}\cap \ell_{i}|<2^{m+1}$.
Let ${\cal P}\cap_{\RR}\ell_{i}$ be the set of points $(\Delta^+(a),\Delta^+(b))$ such that $(a,b)\in \pts\cap\ell_{i}$.
Note that ${\cal P}\cap\ell_{i}$ is in $\SSS^2$ while ${\cal P}\cap_{\RR}\ell_{i}$ is in $\RR^2$, and that $|{\cal P}\cap\ell_{i}|\ge |{\cal P}\cap_{\RR}\ell_{i}|$.

The lines $\ell_i \subset \RR^2$ are all incident to the origin.
In addition, for every $p\in \pts \cap\ell_{i}$ and $q\in \pts \cap \ell_{i+1}$, the point $\Delta^+(p+q)\in \RR^2$ lies in the interior of the wedge formed by $\ell_{i}$ and $\ell_{i+1}$ in the first quadrant of $\RR^{2}$.
Indeed, if positive $a,b,c,d\in \RR$ satisfy $a/b<c/d$, then $a/b<(a+c)/(b+d) <c/d$.
Thus, for any $1\le i< i'< |\Lambda|$, the sets $(\pts\cap_{\RR}\ell_{i})+(\pts \cap_{\RR}\ell_{i+1})$ and $(\pts \cap_{\RR}\ell_{i'})+(\pts \cap_{\RR}\ell_{i'+1})$ are disjoint.

Fix $1\le  i <|\Gamma|$.
For any $a_{1},a_{2}\in \pts \cap \ell_{i}$ and $a_{3},a_{4}\in \pts \cap \ell_{i+1}$, we have that $\Delta^+(a_{1}+a_{3})\ne \Delta^+(a_{2}+a_{4})$ unless $\Delta^+(a_{1})=\Delta^+(a_{2})$ and $\Delta^+(a_{3})=\Delta^+(a_{4})$.
Indeed, for variables $c,d\in \RR$, the system $(c,c\cdot\lambda_{i})+(d,d\cdot\lambda_{i+1})=(p_{x},p_{y})$ has a unique solution.
In other words, for any $p,q\in{\cal P}\cap \ell_{i}$ and $r,t\in{\cal P}\cap \ell_{i+1}$ that satisfy $\Delta^+(p)\neq \Delta^+(q)$ or $\Delta^+(r) \neq \Delta^+(t)$, we have $p+r\ne q+t$.
Since ${\cal P} = A\times A$ and since $A$ has multiplicity $n^\alpha$, for each $(r,t)\in \RR^2$ at most $n^{2\alpha}$ pairs $(a,b)\in \pts$ satisfy $(\Delta^+(a),\Delta^+(b)) = (r,t)$.
For each point in ${\cal P}\cap_{\RR}\ell_{i+1}$ we arbitrarily consider one point of ${\cal P}\cap\ell_{i+1}$ that corresponds to it, and denote the resulting set as $Q_i$.
Note that $|Q_i|\ge |{\cal P}\cap\ell_{i+1}|/n^{2\alpha}$ and that $Q_i$ consists of points with distinct parameters.
We claim that $|({\cal P}\cap \ell_{i})+ Q_i| = |{\cal P}\cap\ell_{i}| \cdot |Q_i|$.
In other words, we claim that every element of $({\cal P}\cap \ell_{i})+ Q_i$ can be written as a sum in a unique way.
Indeed, for $q\in Q_i$ and $a,a'\in {\cal P}\cap \ell_{i}$ we clearly have $a+s \neq a'+s$ when $\Delta^+(a)\neq \Delta^+(a')$.
If $\Delta^+(a)= \Delta^+(a')$ then $a$ and $a'$ have distinct imaginary parts, again implying $a+s \neq a'+s$.
This leads to
\[ \left|({\cal P}\cap \ell_{i})+({\cal P}\cap \ell_{i+1})\right| \ge \left|({\cal P}\cap \ell_{i})+Q_i\right| \ge |{\cal P}\cap \ell_{i}|\cdot |{\cal P}\cap \ell_{i+1}|/n^{2\alpha}. \]

The rest of the analysis is a technical calculation identical to the one at the end of proof of Theorem \ref{th:SolyDual}.
We do not repeat this analysis here.
\end{proof}


\end{document}